\newtheorem{thm}[equation]{Theorem}
\numberwithin{equation}{section}
\newtheorem{cor}[equation]{Corollary}
\newtheorem{lem}[equation]{Lemma}
\newtheorem{defin}[equation]{Definition}
\newtheorem{prop}[equation]{Proposition}
\newtheorem{tab}[equation]{Table}
\begin{document}
\raggedbottom \voffset=-.7truein \hoffset=0truein \vsize=8truein
\hsize=6truein \textheight=8truein \textwidth=6truein
\baselineskip=18truept

\def\mapright#1{\ \smash{\mathop{\longrightarrow}\limits^{#1}}\ }
\def\mapleft#1{\smash{\mathop{\longleftarrow}\limits^{#1}}}
\def\mapup#1{\Big\uparrow\rlap{$\vcenter {\hbox {$#1$}}$}}
\def\mapdown#1{\Big\downarrow\rlap{$\vcenter {\hbox {$\ssize{#1}$}}$}}
\def\mapne#1{\nearrow\rlap{$\vcenter {\hbox {$#1$}}$}}
\def\mapse#1{\searrow\rlap{$\vcenter {\hbox {$\ssize{#1}$}}$}}
\def\mapr#1{\smash{\mathop{\rightarrow}\limits^{#1}}}
\def\ss{\smallskip}
\def\vp{v_1^{-1}\pi}
\def\at{{\widetilde\alpha}}
\def\sm{\wedge}
\def\la{\langle}
\def\ra{\rangle}
\def\on{\operatorname}
\def\ol#1{\overline{#1}{}}
\def\spin{\on{Spin}}
\def\lbar{\ell}
\def\qed{\quad\rule{8pt}{8pt}\bigskip}
\def\ssize{\scriptstyle}
\def\a{\alpha}
\def\bz{{\Bbb Z}}
\def\im{\on{im}}
\def\ct{\widetilde{C}}
\def\ext{\on{Ext}}
\def\sq{\on{Sq}}
\def\eps{\epsilon}
\def\ar#1{\stackrel {#1}{\rightarrow}}
\def\br{{\bold R}}
\def\bC{{\bold C}}
\def\bA{{\bold A}}
\def\bB{{\bold B}}
\def\bD{{\bold D}}
\def\bh{{\bold H}}
\def\bQ{{\bold Q}}
\def\bP{{\bold P}}
\def\bx{{\bold x}}
\def\bo{{\bold{bo}}}
\def\si{\sigma}
\def\Vbar{{\overline V}}
\def\dbar{{\overline d}}
\def\wbar{{\overline w}}
\def\Sum{\sum}
\def\tfrac{\textstyle\frac}
\def\tb{\textstyle\binom}
\def\Si{\Sigma}
\def\w{\wedge}
\def\equ{\begin{equation}}
\def\b{\beta}
\def\G{\Gamma}
\def\g{\gamma}
\def\k{\kappa}
\def\psit{\widetilde{\Psi}}
\def\tht{\widetilde{\Theta}}
\def\psiu{{\underline{\Psi}}}
\def\thu{{\underline{\Theta}}}
\def\aee{A_{\text{ee}}}
\def\aeo{A_{\text{eo}}}
\def\aoo{A_{\text{oo}}}
\def\aoe{A_{\text{oe}}}
\def\vbar{{\overline v}}
\def\endeq{\end{equation}}
\def\sn{S^{2n+1}}
\def\zp{\bold Z_p}
\def\A{{\cal A}}
\def\P{{\mathcal P}}
\def\cQ{{\mathcal Q}}
\def\cj{{\cal J}}
\def\zt{{\bold Z}_2}
\def\bs{{\bold s}}
\def\bof{{\bold f}}
\def\bq{{\bold Q}}
\def\be{{\bold e}}
\def\Hom{\on{Hom}}
\def\ker{\on{ker}}
\def\coker{\on{coker}}
\def\da{\downarrow}
\def\colim{\operatornamewithlimits{colim}}
\def\zphat{\bz_2^\wedge}
\def\io{\iota}
\def\Om{\Omega}
\def\Prod{\prod}
\def\e{{\cal E}}
\def\exp{\on{exp}}
\def\abar{{\overline a}}
\def\xbar{{\overline x}}
\def\ybar{{\overline y}}
\def\zbar{{\overline z}}
\def\Rbar{{\overline R}}
\def\nbar{{\overline n}}
\def\cbar{{\overline c}}
\def\qbar{{\overline q}}
\def\bbar{{\overline b}}
\def\et{{\widetilde E}}
\def\ni{\noindent}
\def\coef{\on{coef}}
\def\den{\on{den}}
\def\lcm{\on{l.c.m.}}
\def\vi{v_1^{-1}}
\def\ot{\otimes}
\def\psibar{{\overline\psi}}
\def\mhat{{\hat m}}
\def\exc{\on{exc}}
\def\ms{\medskip}
\def\ehat{{\hat e}}
\def\etao{{\eta_{\text{od}}}}
\def\etae{{\eta_{\text{ev}}}}
\def\dirlim{\operatornamewithlimits{dirlim}}
\def\gt{\widetilde{L}}
\def\lt{\widetilde{\lambda}}
\def\st{\widetilde{s}}
\def\ft{\widetilde{f}}
\def\sgd{\on{sgd}}
\def\lfl{\lfloor}
\def\rfl{\rfloor}
\def\ord{\on{ord}}
\def\gd{{\on{gd}}}
\def\rk{{{\on{rk}}_2}}
\def\nbar{{\overline{n}}}
\def\lg{{\on{lg}}}
\def\cB{\mathcal{B}}
\def\cS{\mathcal{S}}
\def\cP{\mathcal{P}}
\def\N{{\Bbb N}}
\def\Z{{\Bbb Z}}
\def\Q{{\Bbb Q}}
\def\R{{\Bbb R}}
\def\C{{\Bbb C}}
\def\l{\left}
\def\r{\right}
\def\mo{\on{mod}}
\def\xt{\times}
\def\notimm{\not\subseteq}
\def\Remark{\noindent{\it  Remark}}

\def\*#1{\mathbf{#1}}
\def\0{$\*0$}
\def\1{$\*1$}
\def\22{$(\*2,\*2)$}
\def\33{$(\*3,\*3)$}
\def\ss{\smallskip}
\def\ssum{\sum\limits}
\def\dsum{\displaystyle\sum}
\def\la{\langle}
\def\ra{\rangle}
\def\on{\operatorname}
\def\o{\on{o}}
\def\U{\on{U}}
\def\lg{\on{lg}}
\def\a{\alpha}
\def\bz{{\Bbb Z}}
\def\eps{\varepsilon}
\def\bc{{\bold C}}
\def\bN{{\bold N}}
\def\nut{\widetilde{\nu}}
\def\tfrac{\textstyle\frac}
\def\b{\beta}
\def\G{\Gamma}
\def\g{\gamma}
\def\zt{{\Bbb Z}_2}
\def\zth{{\bold Z}_2^\wedge}
\def\bs{{\bold s}}
\def\bx{{\bold x}}
\def\bof{{\bold f}}
\def\bq{{\bold Q}}
\def\be{{\bold e}}
\def\lline{\rule{.6in}{.6pt}}
\def\xb{{\overline x}}
\def\xbar{{\overline x}}
\def\ybar{{\overline y}}
\def\zbar{{\overline z}}
\def\ebar{{\overline \be}}
\def\nbar{{\overline n}}
\def\rbar{{\overline r}}
\def\Mbar{{\overline M}}
\def\et{{\widetilde e}}
\def\ni{\noindent}
\def\ms{\medskip}
\def\ehat{{\hat e}}
\def\what{{\widehat w}}
\def\Yhat{{\widehat Y}}
\def\nbar{{\overline{n}}}
\def\minp{\min\nolimits'}
\def\mul{\on{mul}}
\def\N{{\Bbb N}}
\def\Z{{\Bbb Z}}
\def\Q{{\Bbb Q}}
\def\R{{\Bbb R}}
\def\C{{\Bbb C}}
\def\notint{\cancel\cap}
\def\cS{\mathcal S}
\def\cR{\mathcal R}
\def\cC{\mathcal C}
\def\cP{\mathcal P}
\def\el{\ell}
\def\TC{\on{TC}}
\def\dstyle{\displaystyle}
\def\ds{\dstyle}
\def\Wbar{\wbar}
\def\zcl{\on{zcl}}
\def\Vb#1{{\overline{V_{#1}}}}

\def\Remark{\noindent{\it  Remark}}
\title[Topological complexity of the space of planar $n$-gons]
{Topological complexity (within 1) of the space of isometry classes of planar $n$-gons for sufficiently large $n$}
\author{Donald M. Davis}
\address{Department of Mathematics, Lehigh University\\Bethlehem, PA 18015, USA}
\email{dmd1@lehigh.edu}
\date{September 7, 2016}

\keywords{Topological complexity, planar polygon spaces}
\thanks {2000 {\it Mathematics Subject Classification}: 55M30, 58D29, 55R80.}

\maketitle
\begin{abstract}  Hausmann and Rodriguez classified spaces of planar $n$-gons mod isometry according to their genetic code, which is a collection of sets (called genes) containing $n$. Omitting the $n$ yields what we call gees. We prove that, for a set of gees with largest gee of size $k>0$, the topological complexity (TC) of the associated space of $n$-gons is either $2n-5$ or $2n-6$ if $n\ge 2k+3$. We present evidence that suggests that it is very rare that the TC is not equal to $2n-5$ or $2n-6$.
 \end{abstract}
\section{Introduction}\label{intro}
The topological complexity, $\TC(X)$, of a topological space $X$ is, roughly, the number of rules required to specify how to move between any two points of $X$. A ``rule'' must be such that the choice of path varies continuously with the choice of endpoints. (See \cite[\S4]{F}.) We continue our study, begun in \cite{Forum}, of $\TC(X)$ where $X=\Mbar(\lbar)$ is the space of equivalence classes of oriented $n$-gons in  the plane with consecutive sides of lengths $\ell_1,\ldots,\ell_n$, identified under translation, rotation, and reflection. (See, e.g., \cite[\S6]{HK}.) Here $\ell=(\ell_1,\ldots,\ell_n)$ is an $n$-tuple of positive real numbers. Thus
 $$\Mbar(\ell)=\{(z_1,\ldots,z_n)\in (S^1)^n:\ell_1z_1+\cdots+\ell_nz_n=0\}/O(2).$$
We can think of the sides of the polygon as linked arms of a robot, and then $\TC(X)$ is the number of rules required to program the robot to move from any configuration to any other.

We assume that  $\lbar$ is {\it generic}, which means that there is no subset $S\subset[\![n]\!]$ with $\ds\sum_{i\in S}\ell_i=\dsum_{i\not\in S}\ell_i$.
Here $[\![n]\!]=\{1,\ldots,n\}$, notation that will be used throughout the paper.
When $\lbar$ is generic, $\Mbar(\ell)$ is a connected
$(n-3)$-manifold (\cite[p.314]{HK}), and hence, by \cite[Cor 4.15]{F}, satisfies
\begin{equation}\label{ineq1}\TC(\Mbar(\lbar))\le 2n-5.\end{equation}

The mod-2 cohomology ring $H^*(\Mbar(\ell))$ was determined in \cite{HK}. See Theorem \ref{mycoh} for our interpretation. All of our cohomology groups have coefficients in $\zt$, omitted from the notation. We shall prove that for most length-$n$  vectors $\ell$, there is a nonzero product in $H^*(\Mbar(\ell)\times \Mbar(\ell))$ of $2n-7$ classes of the form
$z\otimes1+1\otimes z$, which implies $\TC(\Mbar(\ell))\ge 2n-6$ by \cite[Cor 4.40]{F},  within 1 of being optimal by (\ref{ineq1}). We say that this lower bound for $\TC(\Mbar(\ell))$ is obtained by zcl (zero-divisor cup length) consideration, or that $\zcl(\Mbar(\ell))\ge 2n-7$. We write $\zbar=z\otimes1+1\otimes z$.

To formulate our result, we recall the concepts of genetic code and gees.
Since permuting the $\ell_i$'s does not affect the space up to homeomorphism, we may assume $\ell_1\le\cdots\le\ell_n$.  We also assume that $\ell_n<\ell_1+\cdots+\ell_{n-1}$, so that $\Mbar(\ell)$ is nonempty.
It is well-understood (e.g., \cite[\S2]{HK})  that the homeomorphism type of $\Mbar(\lbar)$ is determined by which subsets $S$ of $[\![n]\!]$ are {\it short}, which means that
$\dsum_{i\in S}\ell_i<\dsum_{i\not\in S}\ell_i$. Define a partial order on the power set of $[\![n]\!]$  by $S\le T$ if  $S=\{s_1,\ldots,s_\ell\}$ and $T\supset\{t_1,\ldots,t_\ell\}$ with $s_i\le t_i$ for all $i$.
As introduced in \cite{HR}, the {\it genetic code} of $\lbar$  is the  set of maximal elements (called {\it genes}) in the set of short subsets of $[\![n]\!]$ which contain $n$.
The homeomorphism type of $\Mbar(\lbar)$ is determined by the genetic code of $\lbar$. A list of all genetic codes for $n\le 9$ appears in \cite{web}. For $n=6$, 7, and 8, there are 20, 134, and 2469 genetic codes, respectively.

In \cite{Forum}, we introduced the term ``gee,'' to refer to a gene without listing the $n$. Also, a {\it subgee} is any set which is $\le G$ for some gee $G$, under the partial ordering just described. Thus the subgees are just the sets $S\subset[\![n-1]\!]$ for which $S\cup \{ n\}$ is short. Our main theorem is as follows. Let $\lg(-)=\lfloor\log_2(-)\rfloor$.
\begin{thm} \label{mainthm} For a set of gees with largest gee of size $k>0$, the associated space of $n$-gons $\Mbar(\ell)$ satisfies \begin{equation}\label{ineqs}2n-6\le\TC(\Mbar(\ell))\le 2n-5\end{equation}  if $n\ge k+2^{\lg(k)}+3$.\end{thm}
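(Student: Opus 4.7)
The right-hand inequality in (\ref{ineqs}) is immediate from (\ref{ineq1}). For the left-hand inequality, the plan is to establish $\zcl(\Mbar(\ell))\ge 2n-7$, which by \cite[Cor 4.40]{F} is equivalent to $\TC(\Mbar(\ell))\ge 2n-6$. Concretely, I would exhibit $2n-7$ classes $\zbar_j=z_j\otimes 1+1\otimes z_j\in H^1(\Mbar(\ell)\times\Mbar(\ell))$ whose product is nonzero.

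The starting point would be to fix the presentation of $H^*(\Mbar(\ell))$ given by Theorem \ref{mycoh}: a quotient of $\zt[R,V_1,\ldots,V_{n-1}]$ with all generators in degree one, modulo the Hausmann--Knutson relations, which are dictated by the subgees of $\ell$. In particular $H^{n-3}(\Mbar(\ell))\cong\zt$, and one can identify an explicit monomial representing the fundamental class, together with companion monomials that are known not to be forced to zero by any of the relations.

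The heart of the plan is to form a product $\Rbar^{a_0}\prod_{i\in T}\Vbar_i^{a_i}$ with $\sum a_i=2n-7$, where $T\subset[\![n-1]\!]$ is chosen with the largest gee $G$ of size $k$ in mind. Expanding each factor via $\zbar^m=\sum_j\binom{m}{j}z^j\otimes z^{m-j}$ (mod 2) yields a sum of tensor monomials $x\otimes y$, and I would choose the exponents $a_i$ so that, by Lucas's theorem, a single term in the expansion has all binomial coefficients odd, with $\deg x=n-3$ and $\deg y=n-4$ (or vice versa). The inequality $n\ge k+2^{\lg(k)}+3$ is precisely what is needed in order to accommodate the $k$ generators $V_i$ indexed by $G$ together with $2^{\lg(k)}$ further generators (so that a binomial coefficient $\binom{2^{\lg(k)}}{j}$ remains odd for an appropriate $j$) while keeping each tensor factor in a total degree at most $n-3$.

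The chief obstacle will be the final step: verifying that the surviving tensor factor $x\otimes y$ is nonzero in $H^*(\Mbar(\ell)^2)$, i.e., that both $x$ and $y$ are nonzero in $H^*(\Mbar(\ell))$ modulo the full Hausmann--Knutson ideal. This is controlled by the combinatorics of which subsets are short, and I would handle it by pairing each candidate monomial with one whose non-vanishing is already known from the genetic-code description, exploiting the maximality of $G$ among subgees to rule out cancellation by the relations coming from smaller short sets.
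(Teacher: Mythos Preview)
Your outline misses the structural dichotomy on which the paper's proof rests: one must split according to whether $R^m=0$ or $R^m\ne0$ in $H^m(\Mbar(\ell))$, and the two cases require genuinely different arguments (Theorems \ref{genlthm} and \ref{nonzthm}). You have, in effect, sketched only a fragment of the first case, and even there the key idea is not the one you propose. When $R^m=0$ the paper does \emph{not} work with the largest gee $G$; instead it selects a \emph{minimal} $r$ and indices $i_1,\ldots,i_r$ with $R^{m-r}V_{i_1}\cdots V_{i_r}\ne0$ but $R^{m-|S|}V_S=0$ for every proper subset $S\subsetneq\{i_1,\ldots,i_r\}$. That minimality is what kills the extremal terms in the expansion of $\prod_{j<r}\Vb{i_j}^3\cdot\Vb{i_r}^A\cdot\Rbar^{\,\cdots}$; your plan to ``exploit the maximality of $G$'' goes in the wrong direction and would not control the cross terms. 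Moreover, the proof does not arrange for a single surviving term: many terms survive, and one evaluates their sum by introducing a homomorphism $\psi:H^{m-1}\to\zt$ that shadows the Poincar\'e duality isomorphism $\phi:H^m\to\zt$, applying $\phi\otimes\psi$, and reducing the result via Vandermonde to $\binom{2m-2r+1}{m-r+1}+\binom{2^{f+1}-1}{m-r+1}$, which is where the hypothesis $m\ge r+2^{\lg r}$ actually enters.

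The $R^m\ne0$ case is entirely absent from your plan and is not a minor variation: it requires the structural Lemma \ref{ref} (ruling out $\phi$ of ``power-set'' type), a set-theoretic trichotomy (Lemma \ref{setlem}), and two separate propositions (\ref{shortprop}, \ref{longprop}) handling the resulting configurations, the latter involving a five-way case analysis on $t$ and $\Delta$ to choose the exponent $A$. None of this is suggested by your Lucas-theorem heuristic or by the bound $n\ge k+2^{\lg(k)}+3$; in fact this case imposes no size restriction on $n$ at all. Without these ingredients your proposal is not a proof sketch but a statement of intent.
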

\ni Here we mean that $\ell$ is a length-$n$  vector whose genetic code has the given set as its gees, with $n$ appended to form its genes.

The stipulation  $k>0$ excludes the $n$-gon space whose genetic code is $\la\{n\}\ra$. One length vector with this genetic code is $(1^{n-1},n-2)$. A polygon space with this genetic code is homeomorphic to real projective space $RP^{n-3}$, and it is known that, except for  $RP^1$, $RP^3$, and $RP^7$, $\TC(RP^{n-3})$ equals the immersion dimension plus 1, which is usually much less than $2n-6$.(\cite{FTY})

We will prove two results, Theorems \ref{str1} and \ref{str2}, which suggest that (\ref{ineqs}) fails only very rarely. As noted above, it usually fails if the genetic code is $\la\{n\}\ra$. At the other extreme, it fails if the genetic code is $\la\{n,n-3,n-4,\ldots,1\}\ra$, in which case $\Mbar(\ell)$ is a torus $T^{n-3}$ of topological complexity $n-2$.
We showed in \cite{Forum} that, of the 132 equivalence classes of 7-gon spaces excluding $RP^4$ and $T^4$, there are only two, namely those with a single gene, $7321$ or $7521$, for which we cannot prove that they satisfy (\ref{ineqs}). Here we have begun the usual practice of writing genes or gees consisting of single-digit numbers by concatenating. In Theorem \ref{str1}, we obtain a similar result for 8-gon spaces, but this time with just two exceptions out of 2467 equivalence classes.
\begin{thm}\label{str1} Excluding $RP^5$ and $T^5$, the only spaces $\Mbar(\ell)$ with $n=8$ which might not satisfy (\ref{ineqs}) are those with a single gene, $84321$ or $86321$.\end{thm}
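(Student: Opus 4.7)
The plan is to reduce the problem to a finite computation over the list of 2469 genetic codes for $n=8$ tabulated in \cite{web}. First I would invoke Theorem \ref{mainthm} with $n=8$: the hypothesis $n\ge k+2^{\lg(k)}+3$ becomes $k+2^{\lg(k)}\le 5$, which holds precisely for $k\in\{1,2,3\}$. Hence every genetic code whose largest gee has size at most $3$ already satisfies (\ref{ineqs}), and nothing remains to check in those cases.

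What is left are the genetic codes whose largest gee has size $4$ or $5$. For $n=8$, a gee is a subset of $[\![7]\!]$, so there are only finitely many possibilities to enumerate. For each such code, the plan is to exhibit a nonzero product of $2n-7=9$ zero-divisor classes $\bar z=z\otimes 1+1\otimes z$ in $H^*(\Mbar(\ell)\times\Mbar(\ell))$, using the explicit presentation of $H^*(\Mbar(\ell))$ from Theorem \ref{mycoh}. By \cite[Cor 4.40]{F} this forces $\TC(\Mbar(\ell))\ge 2n-6=10$, and combined with (\ref{ineq1}) gives (\ref{ineqs}). Because the Hausmann--Knutson presentation expresses $H^*(\Mbar(\ell))$ as a quotient of a polynomial ring by an ideal determined by the short/long subset structure, this check is purely mechanical: one lists the relations coming from the genetic code, computes the expansion of a chosen product of nine $\bar z$'s, and reduces modulo the ideal. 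This is well suited to a computer algebra system, which I would run over the $k=4$ and $k=5$ codes.

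The expected outcome is that all of these codes admit a suitable product of nine $\bar z$'s except for the two single-gene codes $\la 84321\ra$ and $\la 86321\ra$, and the torus case $\la 854321\ra=T^5$ which is explicitly excluded. For those two exceptional codes one must also carry out the complementary verification, namely that \emph{no} product of nine zero-divisor classes is nonzero, to justify their inclusion in the list of potential exceptions. In practice this means iterating the reduction procedure over a spanning set of zero-divisor products and confirming that every result lies in the ideal; this is again a finite search enabled by the finite-dimensionality of $H^*(\Mbar(\ell))$ (since $\Mbar(\ell)$ is a closed $(n-3)$-manifold).

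The main obstacle is not conceptual but organizational: one must correctly implement the ring $H^*(\Mbar(\ell))$ in each of the (several hundred) genetic codes with largest gee of size $4$ or $5$, and design the search over products of $\bar z$'s efficiently enough to terminate. The favorable aspects are that, compared with the $n=7$ case treated in \cite{Forum}, no new theoretical machinery is needed; and the degree $2n-7=9$ of the target product is small enough relative to $\dim\Mbar(\ell)=5$ that many natural choices of $\bar z$'s have a good chance of surviving. Once the computation is complete, Theorem \ref{str1} follows by collating the cases covered by Theorem \ref{mainthm} with those handled by direct zcl calculation.
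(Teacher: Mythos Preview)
Your plan would work, but it takes a genuinely different route from the paper's proof. Rather than sorting codes by the size $k$ of the largest gee and delegating the $k\ge 4$ cases to a computer search for zcl products, the paper splits on whether $R^m\ne 0$ (covered entirely by Theorem~\ref{nonzthm}, for any $k$) or $R^m=0$. When $R^m=0$ and $m=5$, Theorem~\ref{genlthm} applies provided the minimal $r$ with $R^{m-r}V_{i_1}\cdots V_{i_r}\ne 0$ is at most $3$; the key observation you bypass is that this $r$ need not equal $k$. The paper then lists the seven possible $4$-gees (4321, 5321, 6321, 7321, 5421, 5431, 5432) and, using the relations $\cR_S$ of Theorem~\ref{mycoh}(3), shows by hand that if all top monomials of length $\le 3$ vanish then so do the length-$4$ ones, contradicting $H^m\approx\zt$ --- except precisely for the single-gene codes $84321$ and $86321$. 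This yields a short, essentially computer-free argument with some structural explanation of why exactly those two codes resist the method.

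Your machine enumeration would also succeed and has the virtue of uniformity, but it trades that insight for a black-box check, and as written it is a plan rather than a proof until the computation is actually run and reported. One minor overreach: the ``complementary verification'' that no product of nine zero-divisors survives for $84321$ and $86321$ is not required by the theorem as phrased (``might not satisfy'' only asserts that all \emph{other} codes do satisfy (\ref{ineqs})); the paper merely notes that Theorem~\ref{genlthm} fails to apply in those two cases, which is weaker than what you propose to check.
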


In Section \ref{sec2}, we specialize to monogenic codes and prove in Theorem \ref{str2} that the only genetic codes with a single gene of size 5 (for {\em any} $n$) which do not necessarily satisfy (\ref{ineqs}) are those noted above.

Theorem \ref{mainthm} is an immediate consequence of Theorems \ref{genlthm} and \ref{nonzthm}. We introduce those theorems by giving
 our interpretation (\cite[Thm 2.1]{Forum}) of \cite[Cor 9.2]{HK}, the complete structure of the mod 2 cohomology ring of $\Mbar(\ell)$.
\begin{thm}\label{mycoh}  If $\ell$ has length $n$, the ring $H^*(\Mbar(\ell))$ is generated by classes $$R,V_1,\ldots,V_{n-1}\in H^1(\Mbar(\ell))$$ subject to only the following relations:
\begin{enumerate}
\item All monomials of the same degree which are divisible by the same $V_i$'s are equal. Hence, letting $V_S:=\prod_{i\in S}V_i$,  monomials $R^{d-|S|}V_S$ for $S\subset[\![n-1]\!]$ span $H^d(\Mbar(\ell))$.
\item $V_S=0$ unless $S$ is a subgee of $\ell$.
\item For every subgee $S$ with $|S|\ge n-2-d$, there is a relation $\cR_S$ in $H^d(\Mbar(\ell))$, which says
\begin{equation}\label{rel}\sum_{T\notint S}R^{d-|T|}V_T=0.\end{equation}
\end{enumerate}
\end{thm}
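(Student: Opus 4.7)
\noindent\textbf{Proof plan for Theorem \ref{mycoh}.} This theorem is a rephrasing of the Hausmann--Knutson computation \cite[Cor.~9.2]{HK}, and was established in essentially this form as \cite[Thm.~2.1]{Forum}; the plan is to record the translation. The HK presentation has generators $R,V_1,\ldots,V_{n-1}\in H^1(\Mbar(\ell))$ and three families of relations: (a) $V_i^2=RV_i$ for each $i$; (b) $V_L=0$ whenever $L\cup\{n\}$ is long in $[\![n]\!]$; and (c) for each subgee $S$, a ``gene'' relation whose terms are monomials in $R$ and the $V_i$'s.

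From (a), an easy induction gives $V_i^k=R^{k-1}V_i$ for $k\ge1$, so any monomial collects uniquely as $R^aV_S$, where $S$ records which $V_i$ occur. This yields statement (1), and in particular shows that the classes $R^{d-|S|}V_S$ with $S\subseteq[\![n-1]\!]$ span $H^d(\Mbar(\ell))$. Statement (2) is the translation of (b) under the observation that $S$ is a subgee precisely when $S\cup\{n\}$ is short, which is the complement of long.

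The substantive step is (3). Expanding the HK relation indexed by $S$ using (a) to reduce higher powers, and using (2) to discard monomials $V_T$ for $T$ not a subgee, one arrives at a sum $\sum_{T}R^{d-|T|}V_T=0$; the task is to identify the surviving index set exactly with $\{T:T\not\subseteq S\}$. The size constraint $|S|\ge n-2-d$ places the relation in a degree at most $n-3=\dim\Mbar(\ell)$, above which Poincar\'e duality supplies the remaining relations automatically.

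The main obstacle is completeness: that no further relations are needed. I would verify this by computing, for each $d$, the rank of the quotient of the span in (1) by the relations $\cR_S$ (with $|S|\ge n-2-d$) and matching it against the Betti numbers of $\Mbar(\ell)$ recorded in \cite{HK}. Once the rank agrees in every degree, no additional relations can be present and the presentation is established.
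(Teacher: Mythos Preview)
Your proposal is correct and matches the paper's treatment: the paper does not give a proof of Theorem~\ref{mycoh} at all, but simply states it as its interpretation of \cite[Cor.~9.2]{HK}, already recorded in this form as \cite[Thm.~2.1]{Forum}. Your outline of the translation from the Hausmann--Knutson presentation is accurate and in fact goes a bit further than the paper, which defers entirely to the citations.
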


 It is convenient to let $m=n-3$, which we do throughout. Note that $\Mbar(\ell)$ is an $m$-manifold. The proof of Theorem \ref{mainthm} is split into two cases depending upon whether or not $R^m=0$.
 \begin{thm}\label{genlthm} Suppose $R^m=0$. Then there exists a positive integer $r$ and distinct integers $i_1,\ldots,i_r$ such that $R^{m-r}V_{i_1}\cdots V_{i_r}\ne0\in H^m(\Mbar(\ell))\approx\zt$, but for all proper subsets $S\subsetneq\{i_1,\ldots,i_r\}$, $\ds R^{m-|S|}\prod_{i\in S}V_i=0$. Assume $m\ge r+2^{\lg r}$, and let $f=\lg(m-r)\ge \lg(r)$. Let $A=2m-2r-2^{f+1}+3.$ Then,
$$\prod_{j=1}^{r-1}{\Vb{i_j}}^3\cdot {\Vb{i_r}}^A\cdot \Rbar^{2m+2-A-3r}\ne0\in H^{2m-1}(\Mbar(\ell)\times \Mbar(\ell)),$$
and hence $\TC(\Mbar(\ell))\ge 2m=2n-6$.
\end{thm}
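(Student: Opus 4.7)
Since $R^m=0$ and $H^m(\Mbar(\ell))\cong\zt$ is nonzero, some monomial $R^{m-|S|}V_S$ with $S\neq\emptyset$ must be nonzero, so choosing $S$ of minimum cardinality produces the asserted $r$ and $\{i_1,\ldots,i_r\}$, with every proper subset giving zero. Let $\omega=R^{m-r}V_{i_1}\cdots V_{i_r}$ denote the resulting generator of $H^m$. To prove the claimed product $P=\prod_{j=1}^{r-1}\Vb{i_j}^3\cdot\Vb{i_r}^A\cdot\Rbar^B$ (with $B=2m+2-A-3r=2^{f+1}-r-1$) is nonzero, I plan to establish the stronger statement that $P\cdot(R\otimes 1)$ is nonzero in the one-dimensional group $H^{2m}(\Mbar(\ell)\times\Mbar(\ell))\cong H^m\otimes H^m\cong\zt$, which amounts to computing the coefficient of $\omega\otimes\omega$.

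The next step is to expand $P\cdot(R\otimes 1)$ term-by-term using the binomial theorem in characteristic $2$, simplifying each summand with the identity $V_i^c=R^{c-1}V_i$ (a direct consequence of Theorem~\ref{mycoh}(1)) to write it as $R^{\alpha}V_T\otimes R^{\beta}V_S$ with $T,S\subseteq\{i_1,\ldots,i_r\}$. The minimality hypothesis forces both $T$ and $S$ to equal $\{i_1,\ldots,i_r\}$ in any term contributing to $\omega\otimes\omega$. Parameterizing the expansion by $k_j\in\{0,1,2,3\}$ (the power of $V_{i_j}$ placed in the second tensor slot, for $j<r$), $a\in\{0,\ldots,A\}$, and $b\in\{0,\ldots,B\}$, these conditions translate to $k_j\in\{1,2\}$, $a\in\{1,\ldots,A-1\}$, and the degree relation $\sum k_j+a+b=m$. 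Since $\binom{3}{1}\equiv\binom{3}{2}\equiv 1\pmod 2$, setting $c=\#\{j:k_j=2\}$ reduces the target coefficient to
$$C \equiv \sum_{c=0}^{r-1}\binom{r-1}{c}\sum_{a=1}^{A-1}\binom{A}{a}\binom{B}{m-r+1-c-a}\pmod 2.$$

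Two applications of Vandermonde's identity (first on $a$, subtracting the $a=0$ and $a=A$ boundary terms, then on $c$), together with the reflection $\binom{n}{k}=\binom{n}{n-k}$, will collapse this to
$$C \equiv \binom{2m-2r+1}{m-r+1} + \binom{2^{f+1}-2}{m-r+1} + \binom{2^{f+1}-2}{m-r}\pmod 2,$$
and one application of Pascal's rule combines the latter two into $\binom{2^{f+1}-1}{k+1}$ where $k=m-r$. The final and most delicate step is a Lucas/Kummer case analysis: $\binom{2^{f+1}-1}{k+1}\equiv 1\pmod 2$ precisely when $k\neq 2^{f+1}-1$ (using that $2^{f+1}-1$ is all ones in binary), while $\binom{2k+1}{k+1}$ is odd iff $k=2^j-1$ for some $j$ (a short Kummer computation), which within $k\in[2^f,2^{f+1}-1]$ happens only at $k=2^{f+1}-1$. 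The two cases yield $C\equiv 1+0=1$ and $C\equiv 0+1=1$ respectively, so $C\equiv 1\pmod 2$ always, giving $P\neq 0$ and therefore $\TC(\Mbar(\ell))\ge 2m$ by the standard zero-divisor cup length bound. The hard part will be carrying out this combinatorial reduction cleanly; the specific formula $A=2m-2r-2^{f+1}+3$ is precisely engineered to make both Lucas cases give $C\equiv 1$, and the hypothesis $m\ge r+2^{\lg r}$ enters through the need for $f\ge\lg r$, which ensures $B\ge 0$ and $A\ge 3$ so that $P$ is genuinely well-defined.
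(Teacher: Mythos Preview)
Your proof is correct and follows essentially the same route as the paper: the paper works in bidegree $(m,m-1)$ via an auxiliary homomorphism $\psi:H^{m-1}\to\zt$ satisfying $\psi(x)=\phi(Rx)$, which is exactly equivalent to your device of multiplying by $R\otimes 1$ and reading off the coefficient of $\omega\otimes\omega$ in $H^m\otimes H^m$. The combinatorial reduction (Vandermonde, Pascal, then the Lucas/Kummer dichotomy on $\binom{2(m-r)+1}{m-r+1}+\binom{2^{f+1}-1}{m-r+1}$) is identical to the paper's, down to the intermediate binomial expressions.
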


\begin{thm}\label{nonzthm} Suppose $R^m\ne0$. If $m$ is a 2-power, then $\Rbar^{2m-1}\ne0$. If $m$ is not a 2-power, then there exist  positive integers $t$ and $A$ and distinct integers $i_1,\ldots,i_{t+1}$ such that
$$\Vb{i_1}\cdots\Vb{i_t}\,\Vb{i_{t+1}}^A\,\Rbar^{2m-A-t-1}\ne0\in H^{2m-1}(\Mbar(\ell)\times \Mbar(\ell)).$$
Hence in either case $\TC(\Mbar(\ell))\ge 2m=2n-6$.
\end{thm}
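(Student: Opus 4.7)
The proof bifurcates on whether $m$ is a $2$-power.

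\textbf{Subcase $m=2^f$.} I would expand $\Rbar^{2m-1}=\sum_k\tb{2m-1}{k}R^k\otimes R^{2m-1-k}$. Since $H^j(\Mbar(\ell))=0$ for $j>m$, only $k=m-1$ and $k=m$ contribute, and both carry the common coefficient $\tb{2m-1}{m}$. For $m=2^f$, the binary expansions of $m$ and $m-1$ are disjoint, so Lucas's theorem makes this coefficient odd. Since $R^m\ne 0$ forces $R^{m-1}\ne 0$, the survivor $R^{m-1}\otimes R^m+R^m\otimes R^{m-1}$ is a nonzero sum of two distinct K\"unneth basis elements in $H^{2m-1}(\Mbar(\ell)\times\Mbar(\ell))$.

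\textbf{Subcase $m$ not a $2$-power.} Now $\tb{2m-1}{m}$ is even and $\Rbar^{2m-1}=0$, so $V$-classes must be injected. The plan is to choose a subgee $S=\{i_1,\dots,i_{t+1}\}$ (available because $k>0$) and a positive integer $A$, then expand
$$\Vb{i_1}\cdots\Vb{i_t}\,\Vb{i_{t+1}}^A\,\Rbar^{2m-A-t-1}$$
by distributing each factor, reduce using $V_j^a=R^{a-1}V_j$ for $a\ge 1$, and locate a K\"unneth basis element in $H^m(\Mbar(\ell))\otimes H^{m-1}(\Mbar(\ell))$ with odd coefficient. A natural target is $R^m\otimes R^{m-t-2}V_S$, which is nonzero by Poincar\'e duality together with the hypothesis $R^m\ne 0$. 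The dominant contribution---sending every $V$-factor to the right, and extracting $R^m\otimes R^{m-A-t-1}$ from $\Rbar^{2m-A-t-1}$---carries the coefficient $\tb{2m-A-t-1}{m}$, and selecting $A$ adapted to the binary expansion of $m$ (e.g., tied to $2^{\lfloor\log_2 m\rfloor+1}-m$) makes this coefficient odd by Lucas's theorem; the non-$2$-power hypothesis on $m$ is precisely what furnishes the necessary freedom in the binary digits.

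The main obstacle is controlling the parasitic contributions to this same basis element. These arise when some $V_{i_j}$'s land on the left factor and reduce to $R^{m-|L'|}V_{L'}$ for a nonempty $L'\subsetneq S$, or from cross-terms of $\Vb{i_{t+1}}^A=\sum_a\tb{A}{a}V_{i_{t+1}}^a\otimes V_{i_{t+1}}^{A-a}$. Taking $A$ itself to be a $2$-power kills the middle coefficients $\tb{A}{a}$ mod $2$, which collapses most cross terms. The remaining parasites are neutralized by selecting $S$ so that $R^{m-|L'|}V_{L'}=0\in H^m(\Mbar(\ell))\approx\zt$ for every nonempty $L'\subsetneq S$; such a choice is accessible through the subgee poset and the relations $\cR_{S'}$ of Theorem \ref{mycoh}(3), with the hypothesis $R^m\ne 0$ determining which top-class monomials are nonzero. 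Once an odd-coefficient basis element is secured, $\zcl(\Mbar(\ell))\ge 2m-1$ and hence $\TC(\Mbar(\ell))\ge 2m$, completing the proof.
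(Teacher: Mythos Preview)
Your argument for $m$ a $2$-power is correct and essentially matches the paper's.

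For $m$ not a $2$-power there is a genuine gap. Your plan hinges on finding a subgee $S=\{i_1,\dots,i_{t+1}\}$ with $|S|\ge 2$ such that $\phi(R^{m-|S|}V_S)=1$ (so that the target $R^m\otimes R^{m-t-2}V_S$ is detectable) while $\phi(R^{m-|L'|}V_{L'})=0$ for every nonempty proper $L'\subsetneq S$ (to kill the parasites). You assert that ``such a choice is accessible through the subgee poset and the relations $\cR_{S'}$'', but this is not established and need not hold. Writing $\cC=\{I:\phi(R^{m-|I|}V_I)=1\}$, your requirement is that $\cC$ contain some set $S$ of size $\ge 2$ none of whose nonempty proper subsets lies in $\cC$. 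A simple model where this fails is $\cC=\{\emptyset,\{1\},\{2\}\}$: here every size-$1$ subgee already has $\phi=1$, so any $S$ you pick has parasitic proper subsets, while no $S$ of size $\ge 2$ lies in $\cC$ at all. Nothing in Theorem~\ref{mycoh}(3) by itself forbids this pattern, so your argument is incomplete.

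The paper handles this via a dichotomy. First Lemma~\ref{ref}, using the relations $\cR_S$, shows that $\cC$ is never of the form $\cP(X)$. Then the purely combinatorial Lemma~\ref{setlem} shows that any $\cC\ni\emptyset$ not of the form $\cP(X)$ satisfies either (b)~$\cC\cap\cP(T)=\cP(T)-\{T\}$ for some $|T|\ge2$, or (c)~there exist $s\in S$ with $|S|\ge2$ such that the only subset of $S$ in $\cC$ containing $s$ is $S$ itself. Case~(c) is close in spirit to your approach and is dispatched by Proposition~\ref{shortprop} with $A=m-t$; note that it only needs $\phi=0$ for those proper $I\subsetneq T$ containing the one distinguished element $s$, a weaker hypothesis than yours. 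Case~(b), which your proposal does not cover, is treated in Proposition~\ref{longprop}: there one computes
$(\phi\ot\psi)(\Vb1\cdots\Vb t\,\Vb{t+1}^A\,\Rbar^{2m-A-t-1})=\tbinom{2m-t}m+\tbinom{2m-A-t}{m-t}$,
and making this odd requires a five-way case split on $t$ against the binary expansion of $m$---no single $2$-power choice of $A$ works uniformly.
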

\ni In Theorem \ref{nonzthm}, any $m$ large enough, with respect to the given gees, to yield a valid genetic code works in the theorem.

In Section \ref{pfsec}, we prove Theorems \ref{genlthm}, \ref{nonzthm}, and \ref{str1}.
In Section \ref{Incrsec}, we discuss the effect on the length vectors and the cohomology ring of increasing $n$ while leaving the gees unchanged, and give some examples regarding the sharpness of the bound on how large $m$ must be in Theorem \ref{genlthm}.
In Section \ref{sec2},  we give several explicit families of gees of arbitrarily large size to which Theorem \ref{genlthm} applies. We also prove in Theorem \ref{str2}  that there are only three size-5 genes for which we cannot prove (\ref{ineqs}).

\section{Proofs of Theorems \ref{genlthm}, \ref{nonzthm}, and \ref{str1}}\label{pfsec}
In this section, which we feel is the heart of the paper, we prove Theorems \ref{genlthm}, \ref{nonzthm}, and \ref{str1}.

\begin{proof}[Proof of Theorem \ref{genlthm}] We begin with the simple observation that if $R^m=0$, then $r$ can be chosen as
$$r=\min\{t:\ \exists \text{ distinct }i_1,\ldots,i_t\text{ with }R^{m-t}V_{i_1}\cdots V_{i_t}\ne 0\}.$$

 First observe that $A\ge3$ and the exponent of $\Rbar$ is $2^{f+1}-1-r\ge0$, since $f\ge\lg(r)$. By minimality of $r$, in the expansion of the product, factors $V_{i_j}^3\otimes1$, $1\otimes V_{i_j}^3$, $V_{i_r}^A\otimes 1$, and $1\otimes V_{i_r}^A$ will yield 0 in products. A product of $s$ of the $V_{i_j}^2$'s and $(r-1-s)$ of the $V_{i_j}$'s can be written as $R^sP$, where $P:=\ds\prod_{j=1}^{r-1}V_{i_j}$.
Thus our product expands in bidegree $(m,m-1)$ as
\begin{equation}\label{sum}\sum_{s=0}^{r-1}\tbinom{r-1}sR^sP\sum_{j=1}^{A-1}\tbinom AjV_{i_r}^j\tbinom{2m+2-A-3r}{m-s-r-j+1}R^{m-s-r-j+1}\otimes R^{r-1-s}PV_{i_r}^{A-j}R^{m+1-A-2r+s+j}.\end{equation}

Let $\phi:H^m(\Mbar(\ell))\to\zt$ be the Poincar\'e duality isomorphism. Let $V_I$ denote any product of distinct classes $V_i$. There is a homomorphism $\psi:H^{m-1}(\Mbar(\ell))\to\zt$ satisfying that $\psi(R^{m-1-|I|}V_I)\ne0$ iff $\phi(R^{m-|I|}V_I)\ne0$. This follows from Theorem \ref{mycoh} since the relations in $H^{m-1}(\Mbar(\ell))$ are also relations in $H^m(\Mbar(\ell))$ (with a dimension shift). Thus $\phi\ot\psi$ applied to any summand of (\ref{sum}) which has a nonzero coefficient, mod 2, is nonzero, and so $\phi\ot\psi$ applied to (\ref{sum}) equals
\begin{eqnarray*}&&\sum_{s=0}^{r-1}\sum_{j=1}^{A-1}\tbinom{r-1}s\tbinom{A}j\tbinom{2m+2-A-3r}{m-s-r-j+1}\\
&=&\sum_{s=0}^{r-1}\sum_{j=0}^{A}\tbinom{r-1}s\tbinom{A}j\tbinom{2m+2-A-3r}{m-s-r-j+1}+\sum_{s=0}^{r-1}\tbinom{r-1}s\tbinom{2m+2-A-3r}{m-s-r+1}+\sum_{s=0}^{r-1}\tbinom{r-1}s\tbinom{2m+2-A-3r}{m-s-r-A+1}.
\end{eqnarray*}
By Vandermonde's identity, this equals
\begin{equation}\tbinom{2m-2r+1}{m-r+1}+\tbinom{2m-2r+1-A}{m-r+1}+\tbinom{2m-2r+1-A}{m-r+1-A}.\label{image}\end{equation}
The last binomial coefficient equals $\tbinom{2m-2r+1-A}{m-r}$, and so the sum of the last two equals $\tbinom{2m-2r+2-A}{m-r+1}$. Inserting now the value of $A$, we find that the image of our class equals
$$\tbinom{2m-2r+1}{m-r+1}+\tbinom{2^{f+1}-1}{m-r+1}.$$
For $2^f\le m-r\le 2^{f+1}-1$, this expression equals 1, coming from the first term if $m-r=2^{f+1}-1$ and from the second term otherwise.
\end{proof}

The proof of Theorem \ref{nonzthm} is a bit more elaborate. We will always be using the homomorphism $\psi:H^{m-1}(\Mbar(\ell))\to\zt$
which equals the Poincar\'e duality isomorphism $\phi:H^m(\Mbar(\ell))\to\zt$ in the sense of the preceding proof.
We first observe that if $\phi(R^m)\ne0$ and $\psi(R^{m-1})\ne0$ and $m=2^e$, then Theorem \ref{nonzthm} is true since $$(\phi\ot\psi)(\Rbar^{2m-1})=\tbinom{2m-1}m\phi(R^{m})\psi(R^{m-1})\ne0.$$
In the rest of this section, we assume $m$ is not a 2-power.

The following key lemma rules out certain possibilities for $\phi$.
\begin{lem}\label{ref} It cannot happen that there is a subset $S\subset[\![m+2]\!]$ such that
$$\phi(R^{m-|I|}V_I)=\begin{cases}1&I\subset S\\
0&\text{otherwise.}\end{cases}$$
\end{lem}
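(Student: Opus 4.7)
The plan is to suppose for contradiction that some $S\subset[\![m+2]\!]$ satisfies the hypothesis, and set $U=\{I\subset[\![m+2]\!]:\phi(R^{m-|I|}V_I)=1\}$, so that $U=\mathcal{P}(S)$.

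First I observe that $S$ itself must be a subgee: since $S\in\mathcal{P}(S)=U$, one has $R^{m-|S|}V_S\ne0$, which by Theorem \ref{mycoh}(2) forces $V_S\ne0$, i.e., $S$ is a subgee.

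Next I choose a nonempty subgee $S_1\supseteq S$. If $S\ne\emptyset$, take $S_1=S$. If $S=\emptyset$, take $S_1=\{1\}$; this is a subgee because in the setting of Theorem \ref{nonzthm} the gees are nonempty (the $RP^{n-3}$ case of empty gees is excluded), and $\{1\}\le G$ for every nonempty gee $G$. Since $|S_1|\ge 1=n-2-m$, Theorem \ref{mycoh}(3) yields the relation $\cR_{S_1}$ in $H^m$,
$$\sum_{T\cap S_1=\emptyset}R^{m-|T|}V_T=0.$$
Applying $\phi$ reduces the left side to the mod-$2$ count of subsets $T$ with both $T\in U$ and $T\cap S_1=\emptyset$. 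The condition $T\in U=\mathcal{P}(S)$ means $T\subseteq S$, and combined with $T\cap S_1=\emptyset$ and $S\subseteq S_1$ this forces $T\subseteq S\cap S_1^{c}\subseteq\emptyset$, so $T=\emptyset$. Exactly one term survives, contributing $\phi(R^m)=1$, which contradicts the relation.

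The main obstacle is the choice of $S_1$ when $S=\emptyset$: the relation $\cR_S$ itself is not available in $H^m$ when $|S|=0$, so one must invoke the existence of some singleton subgee to apply $\cR_{\{1\}}$ in its place. Once $S_1\supseteq S$ is in hand, the disjointness condition ``$T\notint S_1$'' in the relation isolates the single summand $T=\emptyset$, yielding the parity contradiction immediately.
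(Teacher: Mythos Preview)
Your argument is correct and in fact more direct than the paper's. The paper first invokes an external result (\cite[Cor~1.6]{D}) to show that $\phi$ takes the value $1$ on every maximal subgee, deduces from the hypothesis that the unique such subgee is $[\![k]\!]$, and then obtains the contradiction from the \emph{pair} of relations $\cR_{[\![k]\!]}+\cR_{[\![k-1]\!]}$, which isolates the single subgee term $J=\{k\}$. You bypass all of this by applying the single relation $\cR_{S_1}$ with $S_1\supseteq S$: since every $T$ contributing $\phi=1$ lies in $S\subseteq S_1$, the disjointness condition $T\cap S_1=\emptyset$ forces $T=\emptyset$, and the relation immediately gives $\phi(R^m)=0$, contradicting the hypothesis. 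Your route is self-contained (no appeal to \cite{D}) and shorter; the paper's approach instead exploits structural information about maximal subgees.

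One minor caveat: the step ``$R^{m-|S|}V_S\ne0$, hence $S$ is a subgee'' tacitly assumes $|S|\le m$, whereas the lemma as literally stated allows any $S\subset[\![m+2]\!]$. This is harmless for the intended application in the proof of Theorem~\ref{nonzthm}, where $S=X\in\cC$ automatically has $|X|\le m$, and the paper's own argument likewise presupposes $k\le m$ when writing $R^{m-k}$. If you want the lemma in full generality you should either record the assumption $|S|\le m$ or treat the edge cases $|S|=m+1,\,m+2$ separately.
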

\begin{proof} We assume such a set $S$ exists and will derive a contradiction.
Let $k$ denote the size of the largest subgee. By \cite[Cor 1.6]{D}, $\phi(R^{m-k}V_{i_1}\cdots V_{i_k})=1$ whenever $\{i_1,\ldots,i_k\}$ is a subgee. Although the result in \cite{D} is  apparently only referring to monogenic codes, the proof applies more generally. By the assumption, we conclude that there can only be one subgee of size $k$, and it must be $[\![k]\!]$. The sum $\cR_{[\![k]\!]}+\cR_{[\![k-1]\!]}$ of relations from Theorem \ref{mycoh} implies that the sum of $\phi(R^{m-|J|}V_J)$ taken over all subgees $J$ for which $J\cap[\![k]\!]=\{k\}$ must be 0. This sum includes the term $\phi(R^{m-1}V_k)=1$, while all other terms in the sum have $J\not\subset[\![k]\!]$ and hence have $\phi(R^{m-|J|}V_J)=0$, contradicting that the sum is 0.\end{proof}

The next two propositions are special cases of the theorem.  If $S\subset[\![t]\!]$, let $\widetilde S=[\![t]\!]-S$. We repeat that $\psi(R^{m-1-|I|}V_I)=\phi(R^{m-|I|}V_I)$ is assumed.
\begin{prop}\label{shortprop}  Let $T\subset[\![m+2]\!]$ with $|T|\le m$, and reindex as $T=[\![t+1]\!]$. Suppose $\phi(R^m)=1$, $\phi(R^{m-|T|}V_T)=1$, and $\phi(R^{m-|I|}V_I)=0$ for all $I\subsetneq T$ such that  $t+1\in I$. Then
$$(\phi\ot\psi)(\Vb{1}\cdots\Vb{t}\,\Vb{t+1}^{m-t}\Rbar^{m-1})=1.$$\end{prop}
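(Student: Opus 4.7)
The plan is to expand the product $\Vb{1}\cdots\Vb{t}\,\Vb{t+1}^{m-t}\,\Rbar^{m-1}$ by distributivity, isolate the bidegree-$(m,m-1)$ summand (on which $\phi\otimes\psi$ is defined), and show that after the hypotheses are applied exactly one term survives. I parametrize each raw summand by a subset $S\subset[\![t]\!]$ (the $i\in[\![t]\!]$ for which $\Vb{i}$ contributes $V_i\otimes 1$), an integer $j\in\{0,\ldots,m-t\}$ counting the $V_{t+1}$'s sent to the left, and an integer $k\in\{0,\ldots,m-1\}$ counting the $R$'s sent to the left; the bidegree constraint reduces to $|S|+j+k=m$. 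The collapsing relation $V_i^e=R^{e-1}V_i$, immediate from part (1) of Theorem \ref{mycoh}, lets me rewrite every raw summand in the clean form $R^{a}V_{S'}\otimes R^{b}V_{\widetilde S'}$ for subsets $S',\widetilde S'\subset T$.

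Next I apply $\phi\otimes\psi$, using the identification $\psi(R^{m-1-|I|}V_I)=\phi(R^{m-|I|}V_I)$, and split into three cases by the value of $j$. When $j=0$, the bidegree constraint forces $|S|\ge 1$; the right factor then contains $V_{\widetilde S\cup\{t+1\}}$, and the vanishing hypothesis forces $\widetilde S=[\![t]\!]$, i.e.\ $S=\emptyset$, a contradiction. When $1\le j\le m-t-1$, both tensor factors contain $V_{t+1}$, and the hypothesis forces simultaneously $S=[\![t]\!]$ (from the left factor) and $S=\emptyset$ (from the right factor), which is impossible for $t\ge 1$. When $j=m-t$, the left factor is $R^{m-|S|-1}V_{S\cup\{t+1\}}$, whose $\phi$-value vanishes unless $S=[\![t]\!]$; this forces $\widetilde S=\emptyset$, so the right factor reduces to $R^{m-1}$, and $\psi(R^{m-1})=\phi(R^m)=1$. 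The binomial coefficient in this sole surviving summand is $\binom{m-t}{m-t}\binom{m-1}{0}=1$.

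Hence exactly one summand contributes, with contribution $1$. The main obstacle is the bookkeeping: correctly collapsing each raw monomial via $V_i^e=R^{e-1}V_i$, and carefully matching the hypothesis $\phi(R^{m-|I|}V_I)=0$ (for $I\subsetneq T$ containing $t+1$) against both tensor slots. In contrast to the proof of Theorem \ref{genlthm}, no Vandermonde-style binomial identity is needed here; the vanishing hypothesis is strong enough to kill all but a single summand directly.
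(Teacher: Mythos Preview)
Your argument is correct and essentially identical to the paper's: both expand the product over subsets $S\subset[\![t]\!]$ and exponents $j$ (the paper's $i$), use the hypothesis on $\phi$ together with $\psi(R^{m-1-|I|}V_I)=\phi(R^{m-|I|}V_I)$ to kill every term except $(S,j)=([\![t]\!],m-t)$, and observe that the potential term at $(S,j)=(\emptyset,0)$ carries coefficient $\binom{m-1}{m}=0$ (equivalently, your bidegree constraint $k\le m-1$ forces $|S|\ge1$). Your case split on $j$ is just a slightly more explicit organization of the same computation; the paper states the conclusion in one line.
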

\begin{proof} The expression expands as
$$\sum_{S\subset[\![t]\!]}\sum_{i=0}^{m-t}\tbinom{m-t}i\tbinom{m-1}{m-|S|-i}\phi(V_SV_{t+1}^iR^{m-|S|-i})\psi(V_{\widetilde S}V_{t+1}^{m-t-i}R^{|S|+i-1}).$$
The only terms with $\phi\cdot\psi\ne0$ are those with $(S,i)=([\![t]\!],m-t)$ or $(\emptyset,0)$, and the coefficients of these are 1 and 0, respectively.
\end{proof}

\begin{prop} \label{longprop} Let $T\subset[\![m+2]\!]$ with $|T|\le m$, and reindex as $T=[\![t+1]\!]$.  Suppose  $\phi(V_I)=1$ for  $I\subsetneq T$, and $\phi(V_{T})=0$. Let $m=2^e+\Delta$, $1\le\Delta<2^e$. Then
\begin{enumerate}
\item $(\phi \ot\psi)(\Vb{1}\cdots\Vb{t}\,\Vb{t+1}^A\Rbar^{2m-A-t-1})=\tbinom{2m-t}m+\tbinom{2m-A-t}{m-t}$.
\item $\tbinom{2m-t}m+\tbinom{2m-A-t}{m-t}$ is odd for the following values of $A$:
\begin{enumerate}
\item[a)] If $1\le t\le 2\Delta$, use $A=2\Delta+1-t$;
\item[b)] If $2\Delta+1\le t\le 2^e$ and $\binom{t-\Delta-1}\Delta$ is even, use $A=\Delta$.
\item[c)] If $2\Delta+1\le t\le 2^e$ and $\binom{t-\Delta-1}\Delta$ is odd, use $A=2^{\lg(2\Delta)}$.
\item[d)] If $2^e<t\le m-1$ and $\binom{m-t+\Delta}\Delta$ is even, use $A=\Delta$.
\item[e)] If $2^e<t\le m-1$ and $\binom{m-t+\Delta}\Delta$ is odd, use $A=\Delta-2^{\nu(m-t)}$.
\end{enumerate}
\end{enumerate}
\end{prop}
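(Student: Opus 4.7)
The proposition splits into a cohomological expansion (part 1) and a Lucas-theoretic case check (part 2), and I would treat each separately.

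For part (1), I would expand
$$\Vb{1}\cdots\Vb{t}\,\Vb{t+1}^A\,\Rbar^{2m-A-t-1}$$
as a sum indexed by a subset $S\subset[\![t]\!]$ (recording which $V_j$'s land on the left factor), an index $i\in[0,A]$ (the left exponent of $V_{t+1}$), and an exponent of $R$ fixed by the bidegree-$(m,m-1)$ constraint to be $k=m-|S|-i$. Using Theorem \ref{mycoh}(1) to reduce $V_j^a$ to $V_j R^{a-1}$ for $a\ge 1$, together with the identification $\psi(R^{m-1-|I|}V_I)=\phi(R^{m-|I|}V_I)$, each term equals $\binom{A}{i}\binom{2m-A-t-1}{m-|S|-i}\,\phi(V_J R^{m-|J|})\,\phi(V_{J'}R^{m-|J'|})$ for $J,J'\subset T$ determined by $S$ and by whether $i$ and $A-i$ are positive. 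Under the hypothesis $\phi(V_I)=1$ for $I\subsetneq T$ and $\phi(V_T)=0$, the product of $\phi$'s vanishes exactly when $J=T$ or $J'=T$, i.e., when $S=[\![t]\!]$ with $i\ge 1$, or $S=\emptyset$ with $A-i\ge 1$; otherwise it equals $1$.

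I would then write the surviving sum as (sum over all $(S,i)$) minus the two excluded blocks. Two applications of Vandermonde collapse the full sum to $\binom{2m-1}{m}$, which is even since $m$ is not a $2$-power (Kummer's theorem gives $v_2\binom{2m}{m}=s_2(m)\ge 2$). Each excluded block, after Vandermonde in $i$, reduces to one isolated binomial plus another that cancels against a remnant of the full sum. Collecting modulo $2$ leaves $\binom{2m-t-1}{m}+\binom{2m-t-1}{m-t}+\binom{2m-A-t-1}{m-t}+\binom{2m-A-t-1}{m-A}$, which by the symmetries $\binom{2m-t-1}{m-t}=\binom{2m-t-1}{m-1}$ and $\binom{2m-A-t-1}{m-A}=\binom{2m-A-t-1}{m-t-1}$, followed by two applications of Pascal, equals $\binom{2m-t}{m}+\binom{2m-A-t}{m-t}$.

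For part (2), the check is a case-by-case Lucas computation. In case (a), choosing $A=2\Delta+1-t$ yields $2m-A-t=2^{e+1}-1$, so $\binom{2m-A-t}{m-t}$ is automatically odd, and one verifies $\binom{2m-t}{m}$ is even for $1\le t\le 2\Delta$ by splitting on $t\le\Delta$ (where $m$ and $m-t$ both carry bit $2^e$, forcing a carry) versus $\Delta<t\le 2\Delta$ (where $m-t=2^e-j$ with $1\le j\le\Delta$ is the $e$-bit complement of $j-1<\Delta$, so cannot avoid a bit of $\Delta$). For cases (b) and (c), the key is the mod-$2$ identity $\binom{2m-t}{m}\equiv\binom{t-\Delta-1}{\Delta}\pmod 2$ valid on $2\Delta+1\le t\le 2^e$, which follows because $m-t$ is the $e$-bit complement of $t-\Delta-1$, so both binomials record the same bit-inclusion of $\Delta$. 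With $A=\Delta$, the companion binomial $\binom{m-t+2^e}{2^e}$ is always odd (case b); with $A=2^{\lg(2\Delta)}$, an analogous Lucas check makes it even (case c). Cases (d) and (e) run on the parallel identity $\binom{2m-t}{m}\equiv\binom{m-t+\Delta}{\Delta}\pmod 2$ for $2^e<t\le m-1$. The main obstacle will be cases (c) and (e), where the precise choices $A=2^{\lg(2\Delta)}$ and $A=\Delta-2^{\nu(m-t)}$ must be shown to inject exactly one additional carry into the binary addition, requiring careful bookkeeping of bit positions against $\Delta$ and against the top bit of $2\Delta$.
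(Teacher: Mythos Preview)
Your approach is essentially the same as the paper's. For part (1), both you and the paper expand over $(S,i)$, observe that pretending all $\phi$-values are $1$ yields $\binom{2m-1}{m}\equiv 0$, isolate the two excluded blocks ($S=[\![t]\!]$, $i\ge 1$ and $S=\emptyset$, $i\le A-1$), apply Vandermonde to each, and combine the resulting four binomials via Pascal into $\binom{2m-t}{m}+\binom{2m-A-t}{m-t}$. Your phrase ``cancels against a remnant of the full sum'' is slightly misleading---what actually happens (and what the paper does) is that each excluded block is completed to the full range $0\le i\le A$, Vandermonde collapses that, and the missing endpoint supplies the second binomial---but the computation is identical. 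For part (2), the paper carries out exactly the case-by-case Lucas/Kummer check you outline, using the same reductions $\binom{2m-t}{m}\equiv\binom{t-\Delta-1}{\Delta}$ in cases (b),(c) and $\binom{2m-t}{m}\equiv\binom{m-t+\Delta}{\Delta}$ in cases (d),(e); your treatment of case (a) is in fact more detailed than the paper's, which simply asserts the first term is $0$.
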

\begin{proof}
Part (1). The expression expands as
$$\sum_{S\subset[\![t]\!]}\sum_{i=0}^A\tbinom Ai\tbinom{2m-A-t-1}{m-i-|S|}\phi(V_SV_{t+1}^iR^{m-i-|S|})\psi(V_{\widetilde S}V_{t+1}^{A-i}R^{m-A-t-1+i+|S|}).$$
 If it were the case that all $\phi(-)=1$, this would equal $\sum\binom ts\binom{2m-t-1}{m-s}=\binom{2m-1}m\equiv0$, since we assume $m$ is not a 2-power. Our given values of $\phi$ differ from this only for $S=[\![t]\!]$ and $i>0$, or $S=\emptyset$ and $i<A$. Thus the sum becomes
\begin{eqnarray*}&&\sum_{i=1}^A\tbinom Ai\tbinom{2m-A-t-1}{m-i-t}+\sum_{i=0}^{A-1}\tbinom Ai\tbinom{2m-A-t-1}{m-i}\\
&=&\tbinom{2m-t-1}{m-t}+\tbinom{2m-A-t-1}{m-t}+\tbinom{2m-t-1}m+\tbinom{2m-A-t-1}{m-A}\\
&=&\tbinom{2m-t}m+\tbinom{2m-A-t}{m-t}.\end{eqnarray*}

Part 2a. The expression equals $\binom{2^{e+1}+2\Delta-t}{2^e+\Delta}+\tbinom{2^{e+1}-1}{2^e-\Delta+t-1}=0+1$.

Part 2b. The expression equals $\binom{2^{e+1}+2\Delta-t}{2^e+\Delta}+\binom{2^{e+1}+\Delta-t}{2^e}$. The first of these is congruent to $\binom{-(t-2\Delta)}\Delta\equiv\binom{t-\Delta-1}\Delta\equiv0$ by assumption, while the second is 1 since $t\le 2^e$.

Part 2c. The first term is as in (b), except now it is 1 by assumption. Let  $\Delta=2^v+\delta$ with $0\le\delta<2^v$. The second term becomes $\binom{2^{e+1}+2\delta-t}{2^e-2^v+\delta}$, which is 0 since the top has a 0 in the $2^{v+1}$ position, while the bottom has a 1 there.

Part 2d. The expression is the same as in part b. Now the first term is of the form $\binom{2^e+m+\Delta-t}{2^e+\Delta}\equiv0$ since $\binom{m+\Delta-t}{\Delta}\equiv0$ by assumption. The second term is 1 since its top is between $2^e$ and $2^e+\Delta$.

Part 2e. Let $m-t=2^w\a$ with $\a$ odd. The first term is $\binom{2^e+2^w\a+\Delta}{2^w\a}\equiv\binom{2^w\a+\Delta}{2^w\a}\equiv1$ by assumption. The second term is $\binom{2^e+2^w\a+2^w}{2^w\a}$, yielding 0 due to the $2^w$ position.

\end{proof}

We need one more lemma, in which $\cP(S)$ denotes the power set of $S$.
\begin{lem}\label{setlem} If $U$ is a set, and $\cC\subset\cP(U)$ with $\emptyset\in\cC$, then either
\begin{enumerate}
\item[a)] $\cC=\cP(X)$ for some $X\subset U$, or
\item[b)] there exists $T\subset U$ with $|T|\ge2$ such that $\cC\cap\cP(T)=\cP(T)-\{T\}$, or
\item[c)] there exists $s\in S\subset U$ such that $\{C\in\cC:\ s\in C\}\cap\cP(S)=\{S\}$, and $|S|\ge2$.
\end{enumerate}
\end{lem}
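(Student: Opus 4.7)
The plan is to prove the contrapositive: assume that neither (b) nor (c) holds, and produce the $X$ required by (a). The identification of $X$ is forced, since if $\cC=\cP(X)$ then a singleton $\{x\}$ lies in $\cC$ iff $x\in X$, so the only possible candidate is
$$X:=\{x\in U:\{x\}\in\cC\}.$$
I would then verify the two inclusions $\cP(X)\subset\cC$ and $\cC\subset\cP(X)$ separately, using the failure of (b) for the first and the failure of (c) for the second.

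For $\cP(X)\subset\cC$, I induct on $|T|$ for $T\subset X$. The cases $|T|=0,1$ follow from $\emptyset\in\cC$ and the definition of $X$. For $|T|\ge 2$, the inductive hypothesis places every proper subset of $T$ into $\cC$; were $T$ itself not in $\cC$, this $T$ would witness (b), contrary to assumption. Hence $T\in\cC$.

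For $\cC\subset\cP(X)$, I argue by contradiction: suppose some $C\in\cC$ contains an element $y\in U-X$, so $\{y\}\notin\cC$. The collection $\{C'\in\cC:y\in C'\}$ is then nonempty and finite, so it has a $\subset$-minimal element $S$. Then $|S|\ge 2$, since $|S|=1$ would force $S=\{y\}\in\cC$, contradicting $y\notin X$. By minimality of $S$, no proper subset of $S$ containing $y$ lies in $\cC$, so $\{C'\in\cC:y\in C'\}\cap\cP(S)=\{S\}$; this is exactly (c) with $s=y$, the contradiction sought. Combining the two inclusions yields $\cC=\cP(X)$, which is (a).

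The main ``obstacle'' is really just spotting the correct candidate $X$; once $X=\{x:\{x\}\in\cC\}$ is in hand both halves of the proof are routine --- a small induction on size and a minimal-element selection --- with no subtle combinatorics to navigate.
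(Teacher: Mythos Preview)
Your proof is correct and follows essentially the same approach as the paper's: the paper also sets $X=\{t\in U:\{t\}\in\cC\}$, obtains (b) from a minimal element of $\cP(X)-\cC$ (equivalent to your induction on $|T|$), and obtains (c) from a minimal element of $\{C\in\cC:s\in C\}$ for some $s\notin X$. Your remark that the relevant collection is ``finite'' tacitly assumes $U$ is finite, but the paper's minimal-element arguments carry the same implicit assumption, and in the application $U=[\![m+2]\!]$ is finite anyway.
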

\begin{proof} Let $X=\{t\in U:\ \{t\}\in\cC\}$. If $\cP(X)\not\subset\cC$, then a minimal element $T$ of $\cP(X)-\cC$ is of type (b), and we are done.
If $\cC=\cP(X)$, we are done by (a).

 Thus we may assume that $\cP(X)\subsetneq\cC$. Choose $S'\in\cC-\cP(X)$ and $s\in S'-X$. Note that $\{s\}\not\in\cC$. Let $S$ be a minimal element in $\{C\in\cC:\ s\in\cC\}$.
 This $S$ is of type (c), so we are done.\end{proof}

\begin{proof}[Proof of Theorem \ref{nonzthm}] Let $U$ denote the set of $i$'s such that $V_i$ is a factor of some nonzero monomial in $H^m(\Mbar(\ell))$.  Let $\cC=\{I\subset U:\ \phi(R^{m-|I|}V_I)\ne0\}$. By Lemma \ref{ref}, either (b) or (c) of Lemma \ref{setlem} is true of $\cC$. If (b), then the theorem is true by Proposition \ref{longprop}, and if (c), then the theorem is true by Proposition \ref{shortprop}.

\end{proof}

\begin{proof}[Proof of Theorem \ref{str1}] By Theorem \ref{nonzthm}, (\ref{ineqs}) holds whenever $R^m\ne0$, and by Theorem \ref{genlthm}, it holds for $n=8$ (hence $m=5$) whenever $R^m=0$ and there is some nonzero monomial $R^{m-r}V_{i_1}\cdots V_{i_r}$ with $1\le r\le 3$. The only genetic code with $n=8$ having a gee of length 5 is that of the $5$-torus, $854321$. The majority of the genetic codes with $n=8$ do not have any gees of size 4, and  the theorem follows immediately for these. However, there are many genetic codes with $n=8$ having a 4-gee of the form 4321, 5321, 6321, 7321, 5421, 5431, or 5432, plus perhaps other, shorter, gees. This can be seen in \cite{web}, or deduced from the definitions. We will show that, except in the excluded cases,  if all monomials $M_S$ in $H^m(\Mbar(\ell))$ corresponding to subgees $S$ of size $\le3$ are 0, then so are the monomials in $H^m(-)$ corresponding to the 4-gees, contradicting that $H^m(-)\approx\zt$.

If the 4-gee is 4321 and there are any other gees, then 5 is a subgee, and $\cR_5$ (see Theorem \ref{mycoh}(3)) in $H^m(\Mbar(\ell))$ is a sum of the monomial $M_{4321}$ corresponding to 4321 plus monomials $M_S$ corresponding to certain subgees of size $\le3$. If all these $M_S=0$, then the relation implies that $M_{4321}=0$ and hence $H^m(-)=0$.

If the only gees and subgees of size 4 are one or more of 5321, 5421, 5431, or 5432, let $T$ denote any one of them, and let $j$ denote the element of $[\![5]\!]$ not contained in $T$.
The relation $\cR_j$ in $H^m(-)$ is a sum of the monomial $M_T$ corresponding to $T$ plus monomials $M_S$ corresponding to subgees $S$ of size $\le3$. If all $M_S$ are 0, then so is $M_T$, and since this holds for all $T$, we deduce $H^m(-)=0$.

If 6321 is the 4-gee and there is another, shorter, gee, then the shorter gee must contain 7. This can be seen at \cite{web}, or can be deduced as follows. [\![Any gee of size $\le 3$ which does not contain 7 and is not $\le6321$ would have to be $\ge54$. But if 86321 is short, then 754 is long and hence so is 854.]\!] Thus 7 is a subgee. The gees and subgees of size 4 are 6321, 5321, and 4321. If all monomials $M_S$ in $H^m(-)$ corresponding to subgees $S$ of size $\le3$ are 0, then the relations $\cR_7$, $\cR_6$, and $\cR_5$ imply, respectively,
$M_{6321}+M_{5321}+M_{4321}=0$, $M_{5321}+M_{4321}=0$, and $M_{6321}+M_{4321}=0$, and hence $H^m(-)=0$.

If 6321 is the only gee, then we do not have $\cR_7$ to work with, and in fact $M_{6321}=M_{5321}=M_{4321}\ne0$ with all monomials corresponding to shorter gees being 0. Hence in this case, we cannot use Theorem \ref{genlthm} to deduce (\ref{ineqs}) when $n=8$.

If 7321 is a gee, there can be no other gees, as can be seen from \cite{web} or deduced similarly to the deduction involving 6321. If all monomials $M_S$ corresponding to subgees of size $\le3$ are 0, then the relations $\cR_7$, $\cR_6$, $\cR_5$, and $\cR_4$ imply that $M_{7321}=M_{6321}=M_{5321}=M_{4321}=0$ and hence $H^m(-)=0$.
\end{proof}

\section{The effect of increasing $n$}\label{Incrsec}
In this section we discuss the effect of increasing $n$, while leaving the gees fixed, on the length vectors and the cohomology ring.

The operation of increasing the number of edges by 1 while leaving the gees unchanged has the following nice interpretation. Two length vectors  are said to be {\it equivalent} if they have the same genetic code, or equivalently their moduli spaces $\Mbar(\ell)$ are homeomorphic.
\begin{prop} Any generic length-$n$  vector is equivalent to one, $(\ell_1,\ldots,\ell_n)$ with $\ell_1\le\cdots\le\ell_n$, in which all lengths are positive integers and \begin{equation}\label{lcond}\ell_n+\ell_{n-1}\le\ell_1+\cdots+\ell_{n-2}+1.\end{equation}
If $\ell$ satisfies (\ref{lcond}), define a new vector $\ell'$ of length $n+1$ by
\begin{equation}\label{new}(\ell_1,\ldots,\ell_{n-1},\tfrac{|\ell|+1}2-\ell_n,\tfrac{|\ell|+1}2),\end{equation}
where $|\ell|=\ell_1+\cdots+\ell_n$. Then $\ell$ and $\ell'$ have the same gees.
\end{prop}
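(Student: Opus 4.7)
My plan is to split the statement into two parts: the existence of an integer representative satisfying (\ref{lcond}), and the invariance of the gees under $\ell\mapsto\ell'$. The second part contains the main calculation.

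For the first part, since the equivalence class of a generic length vector is an open cone in $\R_+^n$, any generic $\ell$ is equivalent---after scaling to clear denominators and permuting coordinates---to an ordered integer vector in the same chamber. If $\{n-1,n\}$ is short throughout the chamber, then (\ref{lcond}) holds automatically. If $\{n-1,n\}$ is long, I would argue that the cone intersected with the affine hyperplane $\ell_{n-1}+\ell_n-(\ell_1+\cdots+\ell_{n-2})=1$ is a nonempty open polytope containing an integer point. Justifying this for every chamber (carefully handling the parity of $|\ell|$, which must match the parity of the desired difference) is the main technical obstacle I expect.

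For the invariance of gees I compute directly. From (\ref{new}), $|\ell'|=2|\ell|+1-2\ell_n$, so substituting $\ell'_{n+1}=(|\ell|+1)/2$ the condition that $G\cup\{n+1\}\subset[\![n+1]\!]$ is short in $\ell'$ becomes
\[
\sum_{i\in G}\ell'_i<\tfrac{|\ell|}{2}-\ell_n.
\]
I split on whether $n\in G$. If $n\notin G$, then $\sum_{i\in G}\ell'_i=\sum_{i\in G}\ell_i$ and the inequality rearranges to $\sum_{i\in G\cup\{n\}}\ell_i<|\ell|/2$, exactly the condition that $G\cup\{n\}$ is short in $\ell$. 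Hence such $G$ is a subgee of $\ell'$ iff it is a subgee of $\ell$. If $n\in G$, write $G=G_0\cup\{n\}$; substituting $\ell'_n=(|\ell|+1)/2-\ell_n$ reduces the condition to $\sum_{i\in G_0}\ell_i<-\tfrac{1}{2}$, which is impossible. Consequently the subgees of $\ell'$ are precisely the subgees of $\ell$ (all of them lying in $[\![n-1]\!]$), and since maximality is preserved, the gees coincide.

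As a final sanity check I would verify that $\ell'$ is itself generic. Since $|\ell'|=2|\ell|+1-2\ell_n$ is odd, any putative equality between a subset sum and its complement in $\ell'$ would---after tracking which of $n$ and $n+1$ lie in the subset---force some subset of $[\![n]\!]$ to have sum exactly $|\ell|/2$ in $\ell$, contradicting the genericity of $\ell$.
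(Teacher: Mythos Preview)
Your second half---the calculation that the gees of $\ell$ and $\ell'$ agree---is correct and in fact tidier than the paper's version. The paper splits into two cases according to whether (\ref{lcond}) holds with equality or strictly, and in each case matches short subsets of $\ell$ containing $n$ with short subsets of $\ell'$ containing $n+1$ by observing that the passage $\ell\to\ell'$ adds the same quantity to both a subset sum and its complementary sum. Your single computation, splitting instead on whether $n\in G$, subsumes both cases at once and also makes it transparent that no subgee of $\ell'$ can contain $n$. Your genericity check is also fine (indeed over-careful: once you know $|\ell'|$ is odd and the entries are integers, genericity is immediate).

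The first half, however, has a real gap. You correctly reduce to the case in which $\{n-1,n\}$ is long, but then you only \emph{assert} that the chamber meets the affine hyperplane $\ell_{n-1}+\ell_n-(\ell_1+\cdots+\ell_{n-2})=1$ in a set containing an integer point, and you flag this as ``the main technical obstacle.'' It is one: that slice is a relatively open convex set in a fixed affine hyperplane, and there is no scaling available (scaling moves you off the hyperplane), so there is no general reason it should contain a lattice point. A narrow chamber could, a priori, miss the lattice on that particular slice. The paper avoids this existence question entirely by an explicit iterative reduction: starting from any nondecreasing integer representative (citing \cite{HR}), if $\ell_n+\ell_{n-1}=\ell_1+\cdots+\ell_{n-2}+2d+1$ with $d\ge1$, it modifies $\ell$ according to three cases (depending on which of $\ell_{n-2},\ell_{n-1},\ell_n$ coincide), each time decreasing $d$ and verifying directly that the genetic code is unchanged. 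That argument is elementary but genuinely needed; your sketch does not replace it.
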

\begin{proof} It is shown in \cite{HR} that any length vector is equivalent to one, $\ell$, with nondecreasing positive integer entries. Since $\ell$ is generic, $|\ell|$ is odd. If it has $\ell_n+\ell_{n-1}=\ell_1+\cdots+\ell_{n-2}+2d+1$ for $d\ge 1$, we can find an equivalent length vector with smaller $d$, and hence eventually satisfy (\ref{lcond}),  as follows. (a) If $\ell_{n-1}>\ell_{n-2}$, then decrease $\ell_n$ and $\ell_{n-1}$ by 1. (b) If $\ell_n>\ell_{n-1}=\cdots=\ell_{n-t}>\ell_{n-t-1}$ for some $t\ge 2$, decrease $\ell_{n-1},\ldots,\ell_{n-t}$ by 1, and decrease $\ell_n$ by $t$. (c) If $\ell_n=\ell_{n-1}=\ell_{n-2}$, decrease each of them by 2.

We  explain briefly why each of these changes does not affect the genetic code. (a) The only short subsets containing $n$, either before or after the change, are of the form $T=S\cup\{ n\}$ with $S\subset[\![n-2]\!]$, and $\sum_{i\in T}\ell_i-\sum_{i\not\in T}\ell_i$ is invariant under the change. (b) First we show that the new vector has nondecreasing entries. We need $\ell_n-\ell_{n-1}\ge t-1$ for $t\ge3$, and this follows easily from $\ell_n+\ell_{n-1}\ge(t-1)\ell_{n-1}+3$. Because in either the old or new vector, the sum of the last two components is greater than half the total, the only short subsets containing $n$ will be of the form $T=S\cup\{n\}$ with $S\subset[\![n-t-1]\!]$, and the change subtracts $t$ from both $T$ and its complement. (c) The hypothesis implies that $\ds\sum_{j=1}^{n-3}\ell_j\le\ell_n-3$, and so the new vector has nondecreasing entries, and both the old and new vectors have $[\![n-3]\!]$ as the only gee.

Now we compare the gees of $\ell$ satisfying (\ref{lcond}) with those of (\ref{new}), which satisfies its own analogue of (\ref{lcond}), in fact with equality.

If equality holds in (\ref{lcond}), then for $\ell$ (resp.~$\ell'$), the only short subsets containing $n$ (resp.~$n+1$) are of the form $T=S\cup\{ n\}$ (resp.~$S\cup\{n+1\}$) with $S\subset[\![n-2]\!]$, and the change adds $\frac{|\ell|+1}2-\ell_n$ to both $\ds\sum_{j\in T}\ell_j$ and $\ds\sum_{j\not\in T}\ell_j$, thus leaving the gees unaffected.
If, on the other hand, $\ell_n+\ell_{n-1}\le\ell_1+\cdots+\ell_{n-2}-1$, then $\ell$ has some short subsets of the form $T_1=S_1\cup\{n-1,n\}$ with $S_1\subset[\![n-2]\!]$, and others of the form $T_2=S_2\cup\{n\}$ with $S_2\subset[\![n-2]\!]$. The vector $\ell'$ of (\ref{new}) will have exactly the sets $S_1\cup\{n-1,n+1\}$ and $S_2\cup\{n+1\}$ as its short subsets containing $n+1$, since again we add $\frac{|\ell|+1}2-\ell_n$ to both $\ds\sum_{j\in T_\eps}\ell_j$ and $\ds\sum_{j\not\in T_{\eps}}\ell_j$, $\eps=1,2$.
\end{proof}

Next we point out the effect on $H^*(-)$ of increasing $n$ while leaving the gees unchanged.
Recall that $m=n-3$. If $s$ denotes the size of the largest gee and $m\ge 2s$, then for $d\le m-s$, $H^d(\Mbar(\ell))$ has a basis $$\{R^{d-|S|}V_S: S\text{ is a subgee with }|S|\le d\},$$ so in this range increasing $m$ leaves the cohomology groups unchanged, while for $m-s<d\le m$, the relations among the subgees depend on $m-d$, and so in this range increasing $m$ acts like suspending.

For our strong TC results, we only use $H^{m-1}(-)$ and $H^m(-)$. We illustrate with a simple example: monogenic codes 7321 and 8321, so the gee is 321 and $m=4$ and 5.  One can verify that in this case $H^m(-)\approx\la R^{m-3}V_1V_2V_3\ra$ and
$$H^{m-1}(-)\approx\la R^{m-3}V_1V_2, \  R^{m-3}V_1V_3,\ R^{m-3}V_2V_3,\ R^{m-4}V_1V_2V_3\ra.$$
So $r=3$ in Theorem \ref{genlthm}, and the theorem applies for $m\ge 5$. Note that in Theorem \ref{genlthm}, we are only considering homomorphisms $\psi:H^{m-1}(-)\to\zt$ which are essentially equal to $\phi:H^m(-)\to\zt$, so that in this case the theorem is only utilizing the class $R^{m-4}V_1V_2V_3$ in $H^{m-1}(-)$.

When $m=5$, $(\phi\ot\psi)({\Vb1}^3\,{\Vb2}^3\,{\Vb3}^3)\ne0$ in bidegree $(5,4)$ as it equals
$$\phi(V_1V_2^2V_3^2)\psi(V_1^2V_2V_3)+\phi(V_1^2V_2V_3^2)\psi(V_1V_2^2V_3)+\phi(V_1^2V_2^2V_3)\psi(V_1V_2V_3^2)\ne0.$$
(Keep in mind part (1) of Theorem \ref{mycoh}.)

When $m=4$, we would need
\begin{equation}\label{71}{\Vb1}^i\,{\Vb2}^j\,{\Vb3}^k\,\Rbar^{7-i-j-k}\ne0\end{equation}
in bidegree $(4,3)$. The only way to get $RV_1V_2V_3\ot V_1V_2V_3$ would be with none of $i$, $j$, or $k$ being a 2-power, and this is impossible with $i+j+k\le7$. We could get $RV_1V_2V_3\ot RV_1V_2$ from $i=j=3$, $k=1$, but this would have even coefficient from
$$V_1^2V_2V_3\ot V_1V_2^2+V_1V_2^2V_3\ot V_1^2V_2.$$
We conclude that (\ref{71}) is impossible, and so we cannot deduce $\TC(\Mbar(\ell))\ge8$ for this $\ell$ when $m=4$ by cohomological considerations.
Thus Theorem \ref{genlthm} is optimal for the gee 321 in obtaining the strong lower bound for its TC when $m\ge5$.

If the only gee is $[\![r]\!]$, we easily see, using Theorem \ref{mycoh}, that the only nonzero monomial in
$H^m(\Mbar(\ell))$ is $R^{m-r}V_{[\![r]\!]}$, while those in $H^{m-1}(\Mbar(\ell))$ are $R^{m-1-r}V_{[\![r]\!]}$ and $R^{m-2-r}V_{[\![r]\!]-\{i\}}$ for $1\le i\le r$. We can sometimes improve upon Theorem \ref{genlthm} because of this explicit information about $H^{m-1}(-)$.

For example, if the only gee is $[\![4]\!]$, Theorem \ref{genlthm} said $\TC\ge 2m$ for $m\ge8$, but we can also deduce $\TC\ge 2m$ for $m=7$ and $m=6$ using
\begin{equation}\label{7}\Vb{1}^3\,\Vb{2}^3\,\Vb{3}^3\,\Vb{4}^4=V_1V_2V_3V_4^4\ot V_1^2V_2^2V_3^2\text{ in bidegree }(7,6)\end{equation}
and
$$\Vb{1}^3\,\Vb{2}^3\,\Vb{3}^3\,\Vb{4}^2=V_1^2V_2V_3V_4^2\ot V_1V_2^2V_3^2\text{ in bidegree }(6,5).$$
When $m=5$, we cannot deduce $\TC\ge10$ using zcl. When $m=4$, so the genetic code is 74321, $\Mbar(\ell)$ is homeomorphic to a 4-torus with $\TC=5$.

If the only gee is $[\![r]\!]$ for $r=5$, 6, or 7, Theorem \ref{genlthm} says $\TC\ge 2m$ for $m\ge r+4$, and one can check that this is all we can do, in the sense that, for $m<r+4$, $\Vb{1}^{i_1}\cdots\Vb{r}^{i_r}\,\Rbar^{2m-1-i_1-\cdots-i_r}=0$.

\section{Specific results for monogenic codes of arbitrary length}\label{sec2}
In this section, we discuss some families of monogenic genetic codes of arbitrary length in which we can show that $R^m=0$ and find the value of $r$ that works in Theorem \ref{genlthm}.
At the end, we discuss evidence suggesting that it is quite rare for a monogenic code to  have $\zcl< 2n-7$ (and hence not be able to deduce $\TC\ge 2n-6$ from zcl).

\begin{defin}Let $\cS_k$ denote the set of $k$-tuples of nonnegative integers such that, for all $j$, the sum of the first $j$ components is $\le j$.
\end{defin}
For $B=(b_1,\ldots,b_k)$, let $|B|=\sum b_i$. The following theorem is the main result of \cite{D}.
\begin{thm}\label{Dthm} Suppose $\ell$ has a single gee, $\{g_1,\ldots,g_k\}$, with $a_i=g_i-g_{i+1}>0$. ($a_k=g_k$.) If $J$ is a set of distinct integers $\le g_1$, let $\theta(J)=(\theta_1,\ldots,\theta_k)$, where $\theta_i$ is the number of elements  $j\in J$ satisfying
$g_{i+1}<j\le g_i$.
Then, if $\ell$ has length $m+3$, the Poincar\'e duality isomorphism $\phi:H^m(\Mbar(\ell))\to\zt$ satisfies
\begin{equation}\label{phitop}\phi(R^{m-r}V_{j_1}\cdots V_{j_r})=\sum_B\prod_{i=1}^k\tbinom{a_i+b_i-2}{b_i},\end{equation}
where $B$ ranges over all $(b_1,\ldots,b_k)$ for which $|B|=k-r$ and $B+\theta(\{j_1,\ldots,j_r\})\in\cS_k$.\end{thm}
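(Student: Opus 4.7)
The plan is to prove Theorem~\ref{Dthm} through an algebraic reduction based on the ring presentation in Theorem~\ref{mycoh}, combined with a combinatorial identity over $\zt$. The first step is to fix a canonical generator of $H^m(\Mbar(\ell))\cong\zt$: the natural choice is $M_0:=R^{m-k}V_{g_1}\cdots V_{g_k}$, the monomial indexed by the gee itself. This choice is consistent with the stated formula because, when $J$ equals the gee, we have $r=k$ and $\theta(J)=(1,\ldots,1)\in\cS_k$; the only admissible tuple is $B=0$, contributing $\prod\binom{a_i-2}{0}=1$. Nonvanishing of $M_0$ would follow from parts~(1) and~(2) of Theorem~\ref{mycoh}: the gee is the unique maximal subgee, so $V_{g_1}\cdots V_{g_k}$ cannot be eliminated by any relation $\cR_S$ in favor of monomials $V_T$ with $T$ a strictly smaller subgee.

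The second step is to express the arbitrary monomial $R^{m-r}V_J$ as a $\zt$-multiple of $M_0$ by iterating the relations $\cR_S$ from Theorem~\ref{mycoh}(3). In top degree $d=m$, these read $\sum_{T\notint S}R^{m-|T|}V_T=0$ for each nonempty subgee $S$. By successively choosing $S$ to be a suitable codimension-one subgee, one obtains local exchange identities that transform the indexing set $J$ block by block: within the $i$-th block $(g_{i+1},g_i]$ an element of $J$ can be swapped against an unused element of the same block, modulo corrections involving indices lying lower in the block structure. The binomial $\binom{a_i+b_i-2}{b_i}$ should emerge here as the mod~$2$ count of length-$b_i$ reduction sequences inside the $i$-th block, which has $a_i-1$ local relations available, while the constraint $B+\theta(J)\in\cS_k$ records that after each reduction step the indexing set remains a valid subgee, so that no term is prematurely killed by Theorem~\ref{mycoh}(2).

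The main obstacle will be verifying the resulting combinatorial identity modulo~$2$. I would approach this via induction on $k$, peeling off the first block and showing that its contribution factors as $\binom{a_1+b_1-2}{b_1}$ times the contribution of the remaining $k-1$ blocks, with the partial-sum condition propagating correctly through the reduction. A fallback strategy would be a Vandermonde-style manipulation in $\zt$ analogous to the one used in the proof of Theorem~\ref{genlthm}, where the lattice-path condition defining $\cS_k$ causes unwanted cross terms to cancel in pairs; alternatively, one could verify that both sides of the claimed formula satisfy the same recursion when a single element of $J$ is removed via a carefully chosen codimension-one relation, reducing the problem to the analogous statement for subgees of smaller size.
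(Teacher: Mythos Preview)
The paper does not contain a proof of this theorem: it is quoted verbatim as ``the main result of \cite{D}'' and used as a black box. Consequently there is no in-paper argument to compare your proposal against.

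As for the viability of your outline, the overall strategy---anchor on the top monomial $M_0=R^{m-k}V_{g_1}\cdots V_{g_k}$ and rewrite every other $R^{m-r}V_J$ in terms of it via the relations $\cR_S$---is the natural one, and the paper's own usage confirms the base case: it cites \cite[Cor~1.6]{D} for $\phi(R^{m-k}V_{i_1}\cdots V_{i_k})=1$ whenever $\{i_1,\ldots,i_k\}$ is a maximal subgee. However, two points in your sketch are not yet arguments. First, the claim that $M_0\ne0$ follows from parts~(1) and~(2) of Theorem~\ref{mycoh} alone is incorrect: those parts only give a spanning set and the vanishing relations $V_S=0$ for non-subgees; to see that $M_0$ survives you must check it against every $\cR_S$ in part~(3), and each such relation does contain $M_0$ (take $T$ equal to the gee whenever $S$ misses it), so there is genuine linear algebra to do. Second, the heart of the matter---that iterating the relations block by block produces exactly the weight $\prod_i\binom{a_i+b_i-2}{b_i}$ with the staircase constraint $B+\theta(J)\in\cS_k$---is asserted (``should emerge'', ``records'') rather than derived. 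The relations $\cR_S$ are not local to a single block: the sum in (\ref{rel}) runs over all $T$ disjoint from $S$, so each application mixes contributions from every block at once, and it is not clear that your ``local exchange identities'' can be isolated without a more careful choice of which $\cR_S$ to subtract. Until you specify that choice and carry out the induction, this remains a plan rather than a proof.
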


The following corollary is useful.
\begin{cor} In  Theorem \ref{Dthm}, (\ref{phitop}) depends only on the reductions $a_i\mod 2^{\lg(2i)}$.
\end{cor}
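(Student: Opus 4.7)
The plan is to reduce the corollary to a routine application of Lucas' theorem. Observe that in formula (\ref{phitop}), the set of tuples $B=(b_1,\ldots,b_k)$ over which the sum is taken is determined by $k$, $r$, and $\theta(\{j_1,\ldots,j_r\})$ alone; it does not involve the $a_i$. So it suffices to fix an admissible $B$ and show, for each $i$, that the parity of the factor $\binom{a_i+b_i-2}{b_i}$ is unchanged when $a_i$ is replaced by its residue modulo $2^{\lg(2i)}$.

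The first step is the bit-subset characterization: $\binom{n}{b}$ is odd if and only if the binary expansion of $b$ is componentwise dominated by that of $n$. Since the binary expansion of $b$ has no $1$'s above position $\lg(b)$, the parity of $\binom{n}{b}$ depends only on $n \bmod 2^{\lg(b)+1}$. Applying this with $n=a_i+b_i-2$, we conclude that $\binom{a_i+b_i-2}{b_i}\bmod 2$ depends only on $a_i\bmod 2^{\lg(b_i)+1}$ whenever $b_i\ge 1$; when $b_i=0$ the factor is simply $1$ and the statement is trivial.

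The second step is to invoke the defining constraint of $\cS_k$: admissibility forces $b_1+\cdots+b_i\le i$, hence $b_i\le i$. Consequently $\lg(b_i)+1\le \lg(i)+1=\lg(2i)$, so $2^{\lg(b_i)+1}$ divides $2^{\lg(2i)}$, and the parity of $\binom{a_i+b_i-2}{b_i}$ is determined by $a_i\bmod 2^{\lg(2i)}$. Taking the mod-$2$ product over $i$ and then the mod-$2$ sum over the admissible $B$'s yields the corollary.

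There is no real obstacle in this argument; it is a clean application of Lucas together with the cap $b_i\le i$ supplied by the $\cS_k$ constraint. The only minor bookkeeping point is the edge case $a_i=1$, $b_i=0$, where the factor is $\binom{-1}{0}=1$ under the standard convention, which is independent of $a_i$ and so harmless.
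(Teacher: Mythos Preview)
Your argument is correct and is essentially the same as the paper's own proof, just spelled out in more detail: both use that the $\cS_k$ constraint forces $b_i\le i$, and then invoke Lucas to conclude that the parity of $\binom{a_i+b_i-2}{b_i}$ depends only on $a_i\bmod 2^{\lg(2b_i)}$, which divides $2^{\lg(2i)}$.
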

\begin{proof} For a $B$-summand in (\ref{phitop}) to be nonzero, it is necessary that each $b_i$ be $\le i$. Binomial coefficients $\binom xb$  depend only on $x\mod2^{\lg(2b)}$.\end{proof}

We often write $\abar_i$ for the mod $2^{\lg(2i)}$-reduction of $a_i$. In Theorems \ref{thm01} and \ref{thm02}, we describe two infinite families of $(\abar_1,\ldots,\abar_k)$ for which Theorem \ref{genlthm} applies, and so we can deduce the strong lower bound for $\TC(\Mbar(\ell))$.

Let $w_i=V_{g_i}$ (or any $V_j$ satisfying $g_{i+1}<j\le g_i$).
 If $I=(\eps_1,\ldots,\eps_k)$ with each $\eps_j\in\{0,1\}$, we  let
\begin{equation}\label{YI}Y_I=R^tw_1^{\eps_1}\cdots w_k^{\eps_k}\end{equation} for any $t$. This notation for $Y_I$ could be extended to include  products of distinct $V_j$ with $j$-values in the same subinterval $(g_{i+1},g_i]$, but the  consideration of such $Y_I$ seems not to be useful. The total grading of our classes $Y_I$ will be implicit, usually $m$ or $m-1$, and the value of $t$ in (\ref{YI}) is chosen to make the class have the desired grading.
 Then (\ref{phitop}) could be restated as
 \begin{equation}\label{Dth}\phi(Y_I)=\sum_B\prod_{i=1}^k\tbinom{a_i+b_i-2}{b_i}\end{equation}
in grading $m$, where $B$ ranges over all $(b_1,\ldots,b_k)$ such that $B+I\in\cS_k$ and $|B+I|=k$.

For monogenic codes as in Theorem \ref{Dthm}, we can easily check whether $R^m=0$, (or equivalently $\phi(R^m)=0$), since $R^m$ is just $Y_I$ with $I$ consisting of all 0's. Thus $\phi(R^m)=\sum_B\tbinom{a_i+b_i-2}{b_i}$ with the sum taken over all $B=(b_1,\ldots,b_k)$ satisfying $|B|=k$ and $B\in\cS_k$. For example, when $k=3$
$$\phi(R^m)= \tbinom{a_3+1}2+\tbinom{a_2}2a_3'+\tbinom {a_3}2(a_1'+a_2')+a_1'a_2'a_3',$$
where $a_i'=a_i-1$.

For a given $k$, there are  $\ds\prod_{i=1}^k2^{\lg(2i)}$ possibilities for $(\abar_i,\ldots,\abar_k)$.
 {\tt Maple} determined the information in Table \ref{T1} regarding how many of these have $R^m=0$.

 \begin{tab}\label{T1}

\begin{center}
{\scalefont{1}{
$$\renewcommand\arraystretch{1.0}\begin{array}{c|rr}
k&\#\  \abar\text{'s}&\# \ R^m=0\\
\hline
3&32&20\\
4&256&128\\
5&2048&1216\\
6&16384&9600
\end{array}$$}}
\end{center}
\end{tab}

Our simplest result follows.  Keep in mind that in the rest of this section $\abar_i$ refers to the reduction mod $2^{\lg(2i)}$ of $a_i=g_i-g_{i+1}$, where $\ell$ has a single gene $\{m+3,g_1,\ldots,g_k\}$.

\begin{thm}\label{thm01} If there is a set $Z\subset[\![k]\!]$ with $|Z|=r$ such that
$$\abar_i= \begin{cases}1&i\in Z\\
0&i\not\in Z,\end{cases}$$
then the hypothesis of Theorem \ref{genlthm} holds for $R^{m-r}\ds\prod_{i\in Z}w_i$, and hence
$\TC(\Mbar(\ell))\ge 2m$ if $m\ge r+2^{\lg(r)}$.
\end{thm}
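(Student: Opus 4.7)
My plan is to apply Theorem \ref{genlthm} with the distinguished classes $\{V_{i_1},\ldots,V_{i_r}\}$ taken to be $\{w_i : i \in Z\}$, so that the distinguished monomial is $M_Z := R^{m-r}\prod_{i\in Z}w_i$. I therefore need to verify (a) $R^m=0$, (b) $M_Z\ne 0$, and (c) $M_S := R^{m-|S|}\prod_{i\in S}w_i = 0$ for every $S\subsetneq Z$. Since $H^m(\Mbar(\ell))\approx\zt$, each of these is equivalent to computing the image under the Poincar\'e duality map $\phi$, which is given by the explicit formula (\ref{Dth}).

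The technical core is a \emph{selection lemma}: for $\bar a_i\in\{0,1\}$, the binomial $\binom{a_i+b_i-2}{b_i}$ is odd if and only if either $\bar a_i=1$ and $b_i=0$, or $\bar a_i=0$ and $b_i\in\{0,1\}$. I would prove this via Lucas's theorem, taking convenient representatives ($a_i=1$ when $\bar a_i=1$, and $a_i=2^{\lg(2i)}$ when $\bar a_i=0$); this is legitimate by the corollary to Theorem \ref{Dthm}. The case $\bar a_i=1$ is immediate since $\binom{b_i-1}{b_i}=0$ for $b_i\ge 1$, and the case $\bar a_i=0$ with $b_i\ge 2$ reduces to the short check that either bit $1$ of $b_i$, or the lowest $1$-bit of $b_i$ at a position $\ge 2$, is flipped to $0$ in $a_i+b_i-2$.

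Given the selection lemma, parts (a), (b), (c) are short counting arguments: the lemma forces every nonzero summand in (\ref{Dth}) to have $b_i=0$ for $i\in Z$ and $b_i\le 1$ for $i\notin Z$, so in particular $|B|\le k-r$. For (a), $|B|=k$ is impossible (since $r\ge 1$); for (c), $|B|=k-|S|>k-r$ is impossible; for (b), $|B|=k-r$ forces the unique choice $b_i=1$ for $i\notin Z$ and $b_i=0$ for $i\in Z$, and then $B+I_Z=(1,\ldots,1)$, which lies in $\cS_k$, so the sum contributes exactly one term of value $1$. Theorem \ref{genlthm} then yields $\TC(\Mbar(\ell))\ge 2m$ whenever $m\ge r+2^{\lg r}$. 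I expect the main obstacle to be the selection lemma itself, especially the bookkeeping for the $\bar a_i=0$ case: the natural representative $a_i=0$ is formally inadmissible (lengths must be positive), and one must confirm that the positive representative $a_i=2^{\lg(2i)}$ produces no extraneous contributions beyond those predicted by the lemma.
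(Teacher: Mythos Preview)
Your proposal is correct and follows essentially the same approach as the paper: the paper's proof also reduces to the observation that $\binom{a_i+b_i-2}{b_i}$ is odd only when $b_i=0$, or $b_i=1$ with $\bar a_i=0$, and then draws the same counting conclusions $|B|\le k-r$ and $B+I_Z=(1,\ldots,1)$. One small caveat: your selection lemma as stated is literally false for $i=1$ (with representative $a_1=2$ one has $\binom{2}{2}=1$), so you should make explicit that you only need it in the range $b_i\le i$, which is forced by the $\cS_k$ constraint and is exactly the restriction invoked in the proof of the corollary to Theorem~\ref{Dthm}.
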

\begin{proof} For $ \abar_i\in\{0,1\}$, the only times that $\binom{a_i+b_i-2}{b_i}$ is odd are when $b_i=0$ or ($b_i=1$ and $\abar_i=0$). Thus a nonzero term in (\ref{Dth}) requires $|B|\le k-r$. For $|I|\le r$, $\phi(Y_I)$ can have nonzero terms only if $|B|=k-r$ and $|I|=r$, and moreover only for the single $B$ given by
$$b_i=\begin{cases}1&i\not\in Z\\
0&i\in Z.\end{cases}$$
If $I$ has 1's in the positions in $Z$, and 0's elsewhere, $\phi(Y_I)=1$ from the single term with $B$ as above and $B+I=(1,\ldots,1)$.
\end{proof}

For example, the case $(\abar_1,\abar_2,\abar_3,\abar_4)=(1,1,0,1)$ applies to any $\ell$ whose genetic code is $\la\{n,g_1,g_2,g_3,g_4\}\ra$ with $n>g_1>g_2>g_3>g_4$ and $g_4\equiv1\ (8)$, $g_3\equiv1\ (4)$, $g_2\equiv2\ (4)$, and $g_1\equiv1\ (2)$. Theorem \ref{thm01} says that $\TC(\Mbar(\ell))\ge 2n-6$ if $n-3\ge 3+2$, so if $n\ge8$. But the smallest values that satisfy the conditions on $n$ and $g_i$ are 87651, and so the condition in the theorem that $m\ge r+2^{\lg r}$ covers all possibilities here. Moreover, there is no $\ell$ having this as genetic code because 432 and its complement would both be short. This suggests that the condition in Theorem \ref{genlthm} that $m\ge r+2^{\lg r}$ will usually cover all possible values of $m$, and so the ``sufficiently large'' in our title will rarely need to be invoked. We will discuss this in more detail at the end of this section.

A result similar to Theorem \ref{thm01} holds if some of the 0's in $\la \abar_i\ra$ are immediately followed by a 2.
\begin{thm}\label{thm02} If there are disjoint subsets  $T,Z\subset[\![k]\!]$ such that
$$\abar_i=\begin{cases}2&i\in T\\
1&i\in Z\\
0&i\in C:=[\![k]\!]-(T\cup Z)\end{cases}$$
and $\abar_{i-1}=0$ whenever $i\in T$, then, with $r=|T|+|Z|$, the hypothesis of Theorem \ref{genlthm} holds for $R^{m-r}\ds\prod_{i\in  Z}w_i\cdot\prod_{i\in T}w_{i-1}$,
and hence
$\TC(\Mbar(\ell))\ge 2m$ if $m\ge r+2^{\lg(r)}$.
\end{thm}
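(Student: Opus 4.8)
The plan is to mimic the proof of Theorem \ref{thm01}, tracking the slightly more complicated bookkeeping that the presence of $2$'s introduces. As in that proof, the key input is formula (\ref{Dth}): $\phi(Y_I)=\sum_B\prod_{i=1}^k\binom{a_i+b_i-2}{b_i}$, where $B$ ranges over tuples with $B+I\in\cS_k$ and $|B+I|=k$. First I would record, for each type of index, exactly which $b_i$ make the factor $\binom{a_i+b_i-2}{b_i}$ odd: if $i\in C$ (so $\abar_i=0$), the factor is odd iff $b_i\in\{0,1\}$; if $i\in Z$ (so $\abar_i=1$), the factor is odd iff $b_i=0$; and if $i\in T$ (so $\abar_i=2$), the factor $\binom{b_i}{b_i}$ is odd for every $b_i\ge0$ — this is the new feature. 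Since a nonzero $B$-summand needs each $b_i\le i$ anyway, the relevant constraint on a $T$-index is only $b_i\le i$, but in practice we will see it is forced to be small.

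Next I would identify the candidate monomial. With $r=|T|+|Z|$, set $Y_{I_0}=R^{m-r}\prod_{i\in Z}w_i\cdot\prod_{i\in T}w_{i-1}$; that is, $I_0$ has a $1$ in position $i-1$ for each $i\in T$ and a $1$ in each position in $Z$, and $0$ elsewhere. (The hypothesis $\abar_{i-1}=0$ for $i\in T$ guarantees $i-1\in C$, so these positions are distinct from $Z$ and the shifts don't collide, using that $T$-indices are non-adjacent via the $\abar_{i-1}=0$ condition; one should check $i-1\ge 1$, i.e.\ $1\notin T$, which follows from $\abar_0$ being vacuous or handled as in the $Z$ case.) I then want to show two things: (a) $\phi(Y_{I_0})=1$, via a single surviving $B$; and (b) for every $I\subsetneq I_0$ (meaning the corresponding $R^{m-|S|}\prod_{i\in S}w_i$ with $S$ a proper subset), $\phi(Y_I)=0$. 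For (a), the natural $B$ to use is $b_i=1$ for $i\in C$ not of the form $i-1$ with $i\in T$... — more precisely, $B$ should be chosen so that $B+I_0=(1,1,\ldots,1)$, which means $b_i=1$ exactly where $I_0$ has a $0$, i.e.\ on the $T$-indices and on the $C$-indices not used as shifts, and $b_i=0$ on $Z$ and on the shifted $C$-indices. One checks this $B$ lies in $\cS_k$ (its partial sums are clearly $\le$ those of $(1,\ldots,1)$, hence $\le j$), has $|B|=k-r$, and every factor is odd: on $Z$-indices $b_i=0$; on shifted $C$-indices $b_i=0$; on other $C$-indices $b_i=1$ with $\abar_i=0$; on $T$-indices $\binom{b_i}{b_i}=1$. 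One must also verify no \emph{other} $B$ contributes to $\phi(Y_{I_0})$, which follows because any odd contribution forces $b_i=0$ on $Z$ and (since $|B+I_0|=k$ and $b_i\le1$ forced on $C$, $b_i=0$ forced on $Z$) pins down the $T$-coordinates too.

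For (b), the argument is the cardinality/support squeeze from Theorem \ref{thm01}: an odd $B$-summand forces $b_i=0$ on $Z$, $b_i\le1$ on $C$, and $b_i\le i$ on $T$; combined with $|B+I|=k$ and $|I|<r$ we must have $|B|>k-r$, so at least one ``extra'' unit of $B$ must sit either on a $C$-position that $I$ already occupies (giving $b_i+\eps_i=2>1$, violating $B+I\in\cS_k$ at that coordinate? — no, $\cS_k$ only bounds partial sums, so this needs the partial-sum argument), or on a $T$-position. The cleanest route is: since $B+I\in\cS_k$ and in particular each coordinate of $B+I$ contributes to the partial sums, and since the total is exactly $k$ (the maximum allowed at $j=k$), every partial-sum inequality that is not tight forces structure; working from the left, any index where $(B+I)$ exceeds the ``all-ones'' profile must be compensated by an earlier deficit, and one argues this is impossible once $|I|<|I_0|$ while maintaining all factors odd. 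I expect \textbf{this last step — showing $\phi(Y_I)=0$ for all proper $I$ in the presence of the $2$'s} — to be the main obstacle, since the $T$-indices allow larger $b_i$ in principle and one has to use the non-adjacency hypothesis $\abar_{i-1}=0$ together with the tightness of the $\cS_k$ constraint at $j=k$ to rule out any alternative $(B,I)$ from surviving. Once (a) and (b) are established, the hypothesis of Theorem \ref{genlthm} holds with this choice of $r$ and these distinct indices, and the conclusion $\TC(\Mbar(\ell))\ge 2m$ for $m\ge r+2^{\lg r}$ follows immediately from that theorem.
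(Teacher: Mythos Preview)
Your overall setup is right, but there is a genuine gap: the ``uniqueness of $B$'' claim in (a) is false, and the direct partial-sum squeeze you propose for (b) does not go through without an additional idea. Concretely, take $k=4$, $(\abar_1,\abar_2,\abar_3,\abar_4)=(0,2,0,2)$, so $T=\{2,4\}$, $Z=\emptyset$, $r=2$, and $I_0=(1,0,1,0)$. The $B$'s with $|B|=2$, $B+I_0\in\cS_4$, and all factors odd are $(0,1,0,1)$, $(0,0,1,1)$, and $(0,0,0,2)$, not just the one with $B+I_0=(1,1,1,1)$. So $\phi(Y_{I_0})$ is a sum of three nonzero terms, and your argument that ``the $T$-coordinates are pinned down too'' is simply incorrect: because $\binom{a_i+b_i-2}{b_i}\equiv1$ for \emph{all} $b_i\le i$ when $\abar_i=2$, the $T$-indices can absorb arbitrary amounts of $|B|$, and neither the oddness constraints nor $|B+I|=k$ determines them. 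The same phenomenon obstructs your sketch for (b): for $|I|<r$ there are typically many admissible $B$'s (ten of them when $I=(0,0,0,0)$ in the example above), and one needs a reason why their number is even.

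The missing mechanism, and the heart of the paper's proof, is a sign-reversing involution. One isolates the set $\cP$ of admissible $B$'s having $b_{t-1}+b_t\ge2$ for some $t\in T$ and, at the minimal such $t$, pairs $B$ with the $B'$ obtained by swapping one unit between positions $t-1$ and $t$ (i.e.\ $b'_{t-1}=1-b_{t-1}$, $b'_t=b_t+2b_{t-1}-1$). One checks $\chi(B+I)=\chi(B'+I)$ using the $\cS_k$ partial sums at positions $t-1$ and $t$ (this is exactly where the hypothesis $\abar_{t-1}=0$, hence $b_{t-1}\in\{0,1\}$, is used), so contributions over $\cP$ cancel in pairs. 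On the complement $\cQ$ one has $b_{t-1}+b_t\le1$ for every $t\in T$, and \emph{now} your counting argument works: $|B|\le|T|+|C'|=k-r$, forcing $|I|=r$ for any nonzero term and leaving a unique $B\in\cQ$ when $I=I_0$. In the example above, $(0,0,1,1)$ and $(0,0,0,2)$ lie in $\cP$ and pair off, while $(0,1,0,1)\in\cQ$ survives to give $\phi(Y_{I_0})=1$. Without this involution, neither (a) nor (b) can be completed.
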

\begin{proof} Let $T'=\{t-1:\ t\in T\}$. Then $T'\subset C$. Let $C'=C-T'$.

Let $I=(\eps_1,\ldots,\eps_k)$ with $\eps_j\in\{0,1\}$ be given.
We will show that $\phi(Y_I)=0$ if $|I|<r$, and $\phi(Y_I)=1$ if $\eps_i=1$ if $i\in Z\cup T'$, and $\eps_i=0$ otherwise.

If $U$ is a $k$-tuple of nonnegative integers, define
$$\chi(U)=\begin{cases}1&\text{if }U\in\cS_k\\
0&\text{if }U\not\in\cS_k.\end{cases}$$
Let $\cB$ denote the set of $k$-tuples $B=(b_1,\ldots,b_k)$ such that $|B+I|=k$ and
$$b_i\begin{cases}
\in\N&i\in T\\
=0&i\in Z\\
\in\{0,1\}&i\in C.\end{cases}$$
Here $\N$ denotes the set of nonnegative integers.
These $b_i$ are the values for which $\binom{a_i+b_i-2}{b_i}\equiv1\mod 2$. By (\ref{Dth})
$$\phi(Y_I)=\sum_{B\in\cB}\chi(B+I)\in\zt.$$

Let
$$\cP=\{B\in\cB:\ b_{t-1}+b_t\ge2\text{ for some }t\in T\}.$$
For an element $B$ of $\cP$, choose the minimal $t$ with $b_{t-1}+b_t\ge 2$, and pair $B$ with the element $B'$ which has $b'_{t-1}=1-b_{t-1}$, $b'_t=b_t+2b_{t-1}-1$, with other entries equal to those of $B$. We show in the next paragraph that $\chi(B+I)=\chi(B'+I)$ for all $B\in\cP$. Thus
$$\sum_{B\in\cP}\chi(B+I)\equiv0\mod 2.$$

Assume, without loss of generality, that $b_{t-1}=0$. The only value of $h$ for which
$$\sum_{i=1}^h(B+I)_i\ne\sum_{i=1}^h(B'+I)_i$$
is $h=t-1$. The only conceivable way to have $\chi(B+I)\ne\chi(B'+I)$ is if
$$\sum_{i=1}^{t-2}(B+I)_i=t-2$$
and $\eps_{t-1}=1$, for then, since $b'_{t-1}=1$, $B'+I$ fails the condition to be in $\cS_k$ at position $t-1$, but $B+I$ doesn't. However, since $b_t\ge2$, $B+I$ fails at position $t$. Thus $\chi(B+I)=\chi(B'+I)$.

Now let $\cQ=\cB-\cP$. If $B\in\cQ$, then
$$|B|=\sum_{T'\cup T}b_i+\sum_{C'}b_i+\sum_Zb_i\le|T|+|C'|+0=|C|=k-r.$$
Since $|I|\le r$, we must have $|B|=k-r$ and $|I|=r$ in order to get a nonzero term in (\ref{Dth}). Thus $\phi(Y_I)=0$ whenever $|I|<r$.

The $Y_I$ being considered has $\eps_i=1$ for $i\in Z\cup T'$. The $B$'s in $\cQ$ which might give a nonzero term in (\ref{Dth}) must have $(b_{t-1},b_t)=(0,1)$ or $(1,0)$ for all $t\in T$, and $b_i=1$ if $i\in C'$. Thus $B+I$ has 1's in $Z\cup C'$, and $(1,1)$ or $(2,0)$ in each position pair $(t-1,t)$. If any $(2,0)$ occurs, then $B+I$ fails to be in $\cS_k$, and so (\ref{Dth}) has only one nonzero term, namely where $(b_{t-1},b_t)=(0,1)$ for all $t\in T$. Thus $B+I\in\cS_k$, and $\phi(Y_I)=1$.\end{proof}

A similar result and proof holds when $\la\abar_i\ra$ contains subsequences of the form $(0,0,3)$ or $(0,1,2)$. Moreover, all these can be combined, so that  (\ref{ineqs}) holds
 for $m\ge r+2^{\lg(r)}$, where $r$ is the number of 1's plus the number of (0,2), (0,0,3), and (0,1,2) sequences in $\la\abar_i\ra$, provided that $\la\abar_i\ra$ contains only 0's and 1's and these sequences. However, there are many other sequences $\la\abar_i\ra$ not of this type for which the result holds.

In the remainder of the paper, we present more evidence that it is very rare that we cannot prove (\ref{ineqs}). For genetic codes with a single gene of size 4, it was shown in \cite{Forum} that $\zcl(\Mbar(\ell))\ge 2n-7$  unless the gene is 6321, 7321, or 7521. Here we prove a totally analogous result for genes of size 5.
\begin{thm} \label{str2} For a genetic code with a single gene of size 5, $\zcl(\Mbar(\ell))\ge 2n-7$  except when the gene is $74321$, $84321$, or $86321$.\end{thm}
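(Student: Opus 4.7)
The plan is to parallel the size-4 case treated in \cite{Forum} via a finite case analysis on residue classes. A single gene of size 5 corresponds to a single gee of size $k=4$ in the notation of Theorem \ref{Dthm}, and the corollary to that theorem tells us that $\phi$ depends only on $(\abar_1,\abar_2,\abar_3,\abar_4)$ modulo $(2,4,4,8)$, leaving $2\cdot 4\cdot 4\cdot 8 = 256$ residue tuples to check. By Table \ref{T1}, exactly 128 of these satisfy $\phi(R^m)\ne 0$; for these, Theorem \ref{nonzthm} immediately supplies a nonzero product of $2n-7$ zero-divisors.

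For each of the remaining 128 tuples, where $\phi(R^m)=0$, I would use (\ref{Dth}) to determine the minimum $r\le 4$ such that some monomial $Y_I$ with $|I|=r$ has $\phi(Y_I)\ne 0$, and then invoke Theorem \ref{genlthm}, which delivers the desired zcl bound provided $m\ge r+2^{\lg r}$. Theorems \ref{thm01} and \ref{thm02}, together with their $(0,0,3)$- and $(0,1,2)$-subsequence extensions discussed after Theorem \ref{thm02}, handle all tuples composed of $0$'s, $1$'s, and those small patterns; the few remaining residue tuples should be treatable by the same pairing-of-$B$-terms strategy used in the proof of Theorem \ref{thm02}.

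Having handled the $m\ge r+2^{\lg r}$ regime, the critical step is to dispose of residue tuples forcing $r=4$ when $m<8$. As illustrated at the end of Section \ref{Incrsec}, explicit cup products such as (\ref{7}) promote the bound from $m\ge 8$ down to $m\ge 6$ for the gee $[\![4]\!]$, and the same technique should work uniformly across the $(1,1,1,1)$-residue gees of size 4, leaving at most $m\in\{4,5\}$ unresolved. Enumerating the size-4 gees $\{g_1,g_2,g_3,g_4\}$ whose $a_i$'s are all odd, with $a_2,a_3\equiv 1\,(4)$ and $a_4\equiv 1\,(8)$, subject to $g_1\le m+2$ and to the short-subset conditions that make the gene a valid genetic code (ruling out $\{4,3,2,1\}$ with $n\le 6$ and $\{6,3,2,1\}$ with $n=7$), leaves exactly the three configurations $74321$, $84321$, and $86321$.

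Finally I would verify these three are genuine exceptions. For $74321$ the space $\Mbar(\ell)$ is the $4$-torus, and $\zcl(T^4)=4<7=2n-7$, so the bound actually fails. For $84321$ and $86321$ the verification proceeds as in the computation around (\ref{71}): using Theorem \ref{mycoh} to list the nonzero monomials in $H^m(\Mbar(\ell))$ and $H^{m-1}(\Mbar(\ell))$ explicitly, and then checking by direct enumeration that every product $\prod \Vb{i}^{e_i}\Rbar^c$ of bidegree $(m,m-1)$ is forced to vanish. The main obstacle is this last step, since one must certify non-existence of any nonzero zcl product of length $2n-7$ by exhausting finitely many exponent patterns---a cleaner unified argument would be desirable but is not supplied by any of our general theorems.
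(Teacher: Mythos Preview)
Your outline matches the paper's: split on whether $R^m=0$, apply Theorem~\ref{nonzthm} or Theorem~\ref{genlthm}, and then handle the small-$m$ leftovers. Two substantive points deserve comment.

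First, your step~2 is more laborious than necessary. Rather than case-checking 128 residue tuples via Theorems~\ref{thm01}, \ref{thm02} and ad~hoc extensions, the paper invokes Lemma~\ref{toplem}, which says that for a monogenic code \emph{all} monomials $Y_I$ with $|I|<k$ vanish (i.e.\ $r=k$) if and only if every $\abar_i=1$. Thus every tuple with $R^m=0$ and $(\abar_1,\abar_2,\abar_3,\abar_4)\ne(1,1,1,1)$ has $r\le 3$, and Theorem~\ref{genlthm} applies once $m\ge 5$; since the only size-5 gene with $n<8$ is $74321$, this disposes of the entire non-$(1,1,1,1)$ block in one stroke.

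Second, and more importantly, your claim that the technique of (\ref{7}) ``should work uniformly across the $(1,1,1,1)$-residue gees'' for $m=6,7$ is precisely the step that carries the real content, and it does \emph{not} go through uniformly with the same homomorphism. For the gee $4321$ the product in (\ref{7}) is detected because $R^{m-4}V_1V_2V_3$ is a nonzero class in $H^{m-1}$. For gees such as $6321$, $8321$, $8721$, $8761$ there are additional $V_i$'s and additional relations, and one must construct, case by case, a ``uniform homomorphism'' $\psi:H^{m-1}\to\zt$ (in the sense of \cite{Forum}) nonzero on certain specific monomials $R^{m-4}V_1V_jV_k$, verify that this $\psi$ is compatible with all the relations $\cR_S$ of Theorem~\ref{mycoh}, and then exhibit an explicit product on which $\phi\otimes\psi$ is nonzero. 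The paper carries this out for each of the five gees separately; your proposal skips it.

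Minor point: the theorem asserts only that the zcl bound is \emph{established} except for the three listed genes, not that it fails there. So your final verification for $84321$ and $86321$ is not part of what needs proving (and the paper does not claim the bound fails for them).
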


The proof will use the following general lemma. Notation is as in (\ref{YI}).
\begin{lem}\label{toplem} For a monogenic code with gene $\{m+3,g_1,\ldots,g_k\}$ and $a_i=g_i-g_{i+1}>0$, all monomials $R^{m-t}w_{i_1}\cdots w_{i_t}$ for $t<k$ are 0 if and only if $a_i\equiv1\mod 2^{\lg(2i)}$ for all $i$.\end{lem}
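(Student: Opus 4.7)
The plan is to rewrite the statement using Theorem \ref{Dthm}: the condition ``$R^{m-t}w_{i_1}\cdots w_{i_t}=0$ for all $t<k$'' is equivalent, via Poincar\'e duality and formula (\ref{Dth}), to $\phi(Y_I)=0$ for every $I\in\{0,1\}^k$ with $|I|<k$. So the task becomes understanding when the sum $\sum_B\prod_{i=1}^k\binom{a_i+b_i-2}{b_i}$ vanishes mod $2$ for each such $I$, where $B$ ranges over vectors with $|B|=k-|I|$ and $B+I\in\cS_k$.

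For the ``if'' direction, I would start by noting that any $B$ appearing in (\ref{Dth}) satisfies $b_i\le i<2^{\lg(2i)}$, so the hypothesis $a_i\equiv 1\pmod{2^{\lg(2i)}}$ yields $a_i+b_i-2\equiv b_i-1\pmod{2^{\lg(2i)}}$, which is enough to determine $\binom{a_i+b_i-2}{b_i}\bmod 2$. Because $b_i$ and $b_i-1$ always differ in the lowest set bit of $b_i$ when $b_i\ge 1$, Lucas' theorem forces $\binom{a_i+b_i-2}{b_i}\equiv 0\pmod 2$ whenever $b_i\ge 1$. So only $B=0$ can contribute, but $|B|=0$ together with $|B+I|=k$ forces $|I|=k$, contradicting $|I|<k$.

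For the converse, the plan is to induct on $s$ for $0\le s\le\lg(k)$ and prove that $a_j\equiv 1\pmod{2^{s+1}}$ for every $j\ge 2^s$; taking $s=\lg(j)$ for each $j$ then recovers $a_j\equiv 1\pmod{2^{\lg(2j)}}$. The key input at stage $s$ is that, under the inductive hypothesis $a_j\equiv 1\pmod{2^s}$ for every $j$, the same Lucas-theorem calculation as in the ``if'' direction kills $\binom{a_j+b_j-2}{b_j}$ whenever $1\le b_j<2^s$, so among $B$ with $|B|=2^s$ only the singletons $B=2^s e_j$ survive. A short partial-sum check shows $B=2^s e_j$ satisfies $B+I\in\cS_k$ iff $j\ge p_{2^s}$, where $p_1<\cdots<p_{2^s}$ are the zero positions of $I$. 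Consequently, whenever $|I|=k-2^s$ and the last zero sits at position $q$,
\[
\phi(Y_I)\equiv\sum_{j\ge q}\binom{a_j+2^s-2}{2^s}\pmod 2,
\]
and by Lucas' theorem the $j$-th summand equals $1$ exactly when $a_j\equiv 1+2^s\pmod{2^{s+1}}$. Letting $q$ range from $k$ down to $2^s$ (filling the other $2^s-1$ zero positions arbitrarily below $q$), a downward induction on $q$ pins down $a_j\bmod 2^{s+1}$ for every $j\ge 2^s$, completing the inductive step.

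The hard part will be aligning the two combinatorial conditions at the heart of the inductive step: the Lucas-theorem calculation showing that only $B=2^s e_j$ survives mod $2$, and the polyhedral check $B+I\in\cS_k$ picking out exactly the range $j\ge p_{2^s}$. Once these are in hand the rest is a clean telescoping down of $q$, but both checks are delicate because they must cooperate so that sliding $p_{2^s}$ one step at a time exposes exactly one new binary digit of each $a_j$.
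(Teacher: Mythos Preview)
Your proposal is correct and follows essentially the same approach as the paper. The paper's proof is terser: it computes $\phi(\widehat Y_{j})=a_j'+\cdots+a_k'$, then $\phi(\widehat Y_{j_1,j_2})=\binom{a_{j_2}}2+\cdots+\binom{a_k}2$ (after the previous congruences kill the other terms), then $\phi(\widehat Y_{j_1,\ldots,j_4})=\binom{a_{j_4}}4+\cdots+\binom{a_k}4$, and finishes with ``continuing in this way''; your induction on $s$ with $|B|=2^s$ and the surviving singletons $B=2^se_j$ is exactly this pattern made explicit, and you additionally isolate the ``if'' direction cleanly.

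One small imprecision worth tightening: your stated inductive hypothesis ``$a_j\equiv 1\pmod{2^s}$ for every $j$'' is not literally what stages $0,\ldots,s-1$ deliver---for $j<2^{s-1}$ you only know $a_j\equiv 1\pmod{2^{\lg(2j)}}$. This does not damage the argument, because for such $j$ the constraint $b_j\le j<2^{\lg(2j)}$ already lets the weaker congruence kill $\binom{a_j+b_j-2}{b_j}$ whenever $b_j\ge 1$; but you should say so rather than overstate the hypothesis.
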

\begin{proof} Let $\Yhat_J$ denote $Y_I$ where $I$ has 0's in the positions in $J$ and 1's elsewhere. Let $a_i'=a_i-1$. Using (\ref{Dth}), $\phi(\Yhat_j)=a_j'+\cdots+a_k'$, and hence all $\phi(\Yhat_j)$ are 0 iff $a_i$ is odd for all $i$. Similarly, $\phi(\Yhat_{j_1,j_2})$ with $j_1<j_2$ has terms with factors $a_i'$, which are 0 by the above, plus terms in (\ref{Dth}) with $B=2\varepsilon_j$ for $j\ge j_2$. For $\phi(\Yhat_{j_1,j_2})$ to be 0, we must have $\binom{a_{j_2}}2+\cdots+\binom{a_k}2=0$. For all these to be 0, we must now have $a_i\equiv1$ mod 4 for all $i>1$.
Similarly, $\phi(\Yhat_{j_1,j_2,j_3,j_4})$ will be a sum of terms already shown to be 0 plus $\binom{a_{j_4}}4+\cdots+\binom{a_k}4$. For all these to be 0, we must have $a_i\equiv 1$ mod 8 for $i\ge4$. Continuing in this way implies the result.\end{proof}

\begin{proof}[Proof of Theorem \ref{str2}] The theorem holds for those gees having $R^m\ne0$ by Theorem \ref{nonzthm}. For those with $R^m=0$ and $(\abar_1,\abar_2,\abar_3,\abar_4)\ne(1,1,1,1)$, the result holds for  $m\ge5$ (hence for $n\ge8$) by Lemma \ref{toplem} and Theorem \ref{genlthm}. But the only  length-5 gene with $n<8$ is 74321.

It remains to consider codes with $(\abar_1,\abar_2,\abar_3,\abar_4)=(1,1,1,1)$. Theorem \ref{genlthm} implies the result for $m\ge8$, so for $n\ge11$. We must consider the gees with $g_1\le 9$ (and all $\abar_i=1$). These are 4321, 6321, 8321, 8721, and 8761. As noted at the end of the previous section, for the gee 4321, we have
\begin{equation}\label{1111} H^{m-1}(\Mbar(\ell))=\la R^{m-5}V_1V_2V_3V_4,R^{m-4}V_1V_2V_3,R^{m-4}V_1V_2V_4,R^{m-4}V_1V_3V_4,R^{m-4}V_2V_3V_4\ra\end{equation}
and $\zcl(\Mbar(\ell))\ge 2n-7$ for $n\ge9$ but not for $n=7$ or 8.

We now study the case when the gee is 6321. From (\ref{1111}), we deduce that there is a uniform homomorphism $\psi:H^{m-1}(\Mbar(\ell))\to\zt$ sending only $R^{m-4}V_1V_2V_4$, $R^{m-4}V_1V_2V_5$, and $R^{m-4}V_1V_2V_6$ nontrivially. ``Uniform,'' as discussed in \cite{Forum}, means here that $\psi$ treats $V_4$, $V_5$, and $V_6$ identically because of the interval from 4 to 6, inclusive, in the gee. This dependence of uniform homomorphisms only on $\abar_i$ was observed in \cite{Forum}, but can be seen in this case directly as follows. [\![The only relations $\cR_J$ corresponding to gees of size $\ge2$ which involve any of these three monomials are $\cR_{3,j}$ for $j\in\{4,5,6\}$. Each of these relations involves only two of the three monomials and so would be sent to 0 by our $\psi$.]\!] Using this $\psi$, we have for $m=7$, similarly to (\ref{7}),
$$(\phi\ot\psi)(\Vb{1}^3\,\Vb{2}^3\,\Vb{3}^4\,\Vb{6}^3)=\phi(V_1V_2V_3^4V_6)\psi(V_1^2V_2^2V_6^2)\ne0,$$
and for $m=6$,
$$(\phi\ot\psi)(\Vb{1}^3\,\Vb{2}^3\,\Vb{3}^2\,\Vb{6}^3)=\phi(V_1^2V_2V_3^2V_6)\psi(V_1V_2^2V_6^2)\ne0.$$
One easily checks that nothing works when $m=5$. This establishes the claim when the gee is 6321.

A similar analysis works for gees 8321, 8721, and 8761. For each, there is a uniform homomorphism $\psi:H^{m-1}(\Mbar(\ell))\to\zt$ sending five classes $R^{m-4}V_1V_jV_k$ nontrivially, where $k$ ranges over the gap in the gee (for example 3, 4, 5, 6, and 7 in the second), and $j=2$, 2, and 7 in the three cases. There are products similar to those of the previous paragraph mapped nontrivially by $\phi\ot\psi$ when $m=7$ or 6. For example, for 8721 and $m=7$, $$(\phi\ot\psi)(\Vb{1}^3\,\Vb{2}^3\,\Vb{7}^3\,\Vb{8}^4)=\phi(V_1V_2V_7V_8^4)\psi(V_1^2V_2^2V_7^2)\ne0.$$
Thus $\zcl\ge 2m-1$ for these when $m=7$ or 6.
\end{proof}

We have performed a similar analysis for single genes of size 6, and found that again the only exceptions to $\zcl\ge 2n-7$ occur when all $\abar_i=1$. However, this time there are twelve such exceptional genes: (using $T$ for 10, and $E$ for 11), 854321, 954321, $T$54321, $E$54321, 974321, $T$74321, $E$74321, $T$94321, $E$94321, $T$98321, $E$98321, $E$98721. Details are available from the author upon request.

 \def\line{\rule{.6in}{.6pt}}

\end{document}